\newcommand{\dual}[2]{\langle#1,\,#2\rangle}
\newcommand{\norm}[1]{\left\Vert#1\right\Vert}
\newcommand{\R}{\mathbb{R}}
\newcommand{\N}{\mathbb{N}}
\newcommand{\E}{\mathbb{E}}
\def\b{\beta}
\def\d{\delta}
\def\g{\gamma}
\def\l{\lambda}
\def\L{\Lambda}
\def\m{\mu}
\def\n{\nu}
\def\s{\sigma}
\def\G{\Gamma}
\def\t{\tau}
\def\pd{\partial}
\newcommand{\mM}{\Gamma}
\def\mr{{\mu^R}}
\newcommand{\vv}{\tilde v}
\newcommand{\cB}{{\cal B}}
\newcommand{\cC}{{\cal C}}
\newcommand{\cE}{{\mathcal E}}
\newcommand{\cM}{{\cal M}}
\newcommand{\cP}{{\mathcal M}^+(\R^d)}
\newcommand{\fdz}{\partial_{(0,t]}^{\beta}}
\newcommand{\fIz}{I_{(0,t]}^{\beta}}
\newcommand{\Lip}{{\rm BL}(\R^d)}
\newcommand{\supp}{{\rm supp}}
\newcommand{\diver}{{\rm div}}
\newcommand{\normBL}[1]{\left\Vert#1\right\Vert^\ast_{BL}}
\newcommand{\wass}[2]{{d_{BL}}\left(#1,\, #2\right)}
\newtheorem{theorem}{Theorem}[section]
\newtheorem{lemma}[theorem]{Lemma}
\newtheorem{definition}[theorem]{Definition}
\newtheorem{proposition}[theorem]{Proposition}
\newtheorem{remark}[theorem]{Remark}
\numberwithin{equation}{section}
\begin{document}
\title{\Large \bf Memory effects in measure transport equations}
\author{Fabio Camilli\footnotemark[1] \and Raul De Maio\footnotemark[1]}
\date{\today}
\maketitle
\begin{abstract}
	Transport equations with a  nonlocal velocity field have been introduced as a continuum model for interacting particle systems   arising  in physics, chemistry  and biology. Fractional time derivatives, given by  convolution integrals of the time-derivative with power-law kernels,  are typical for memory effects in complex systems.
    In this paper we consider  a nonlinear transport equation  with  a fractional time-derivative. We provide     a well-posedness theory for weak measure solutions of the problem   and an integral formula which generalizes the classical push-forward  representation formula to this setting. 
\end{abstract}
 \begin{description}
    \item [{\bf AMS subject classification}:]35R11; 35Q35; 26A33. 
     \item[{\bf Keywords}:]  Transport equation; Measure-valued solution; Fractional derivative.
\end{description}
\footnotetext[1]{Dip. di Scienze di Base e Applicate per l'Ingegneria,  ``Sapienza'' Universit{\`a}  di Roma, via Scarpa 16,
00161 Roma, Italy, ({\tt e-mail: fabio.camilli,raul.demaio@sbai.uniroma1.it})}

\pagestyle{plain}
\pagenumbering{arabic}
\section{Introduction} 
 The measure-valued formulation of nonlinear transport equations  has received  an increasing interest both from a theoretical perspective \cite{ags,ccr} and in various applications,  as a 
 continuum model for interacting particle systems (e.g. in crowd motion, population dynamics, bacterial chemotaxis, kinetic theory, social systems, etc., see  \cite{cpt,ehm}
and reference therein).\par
Recently,   anomalous transport problems describing processes deviating from the usual  Gaussian behaviour  have been observed  in different fields (see  \cite{mk}). In these phenomena, the standard diffusive behaviour is replaced by a subdiffusive one, in which the 
mean square displacement of the diffusing particles is of order $t^\b$ with $\b<1$.  Corresponding  models
lead to the study of differential equations where the  time-derivative is replaced by a fractional one (see for example \cite{acv,ly,ob}). The stochastic dynamics driven by   fractional differential equations is usually referred to  as motion in a non-homogeneous medium. The particles can speed up or slow down according to a random clock which is the inverse to a stable subordinator (time-change of processes) and the dynamic is not Markovian, see  \cite{ms}.  Fractional derivatives     are also considered as a typical approach to add a   memory effect  to a  complex system. The memory term  introduced by the convolution operator in time finds applications in demography,  viscoelastic and  biomaterials, biological processes   and, in general, in  the study  of  constitutive relations depending on the history of the state variables
(see \cite{m,t}).\par
In this paper we consider a measure solution approach to the nonlinear transport equation
\begin{equation}\label{transport_intro}
\left\{\begin{array}{ll}
\fdz \m+\diver(v[\m_t]\,\m)=0\quad & (x,t)\in \R^d\times (0,T), \\[8pt]
 \m_{t=0}=\m_0  & x\in\R^d,
\end{array}
\right.
\end{equation}
where $\fdz\cdot$ is a nonlocal time-differential operator  given by the Caputo fractional derivative of order $\b\in (0,1)$
\[ \pd^{\b}_{(0,t]} \phi(t) =\frac{1}{\G(1-\b)}\int_0^t\frac{d\phi(\t)}{d\t}\frac{1}{(t-\t)^\b}d\t, \qquad t\in (0,T). \]
Since a fractional derivative  at time $t$ depends on the  values  of the time-derivative in the interval $(0,t)$ with a progressively decreasing weight on the past history, the problem is non local in time.
The coefficient  $v[\m](x)$ is a nonlinear  velocity field which depends   on the   solution itself, for example $v[\m](x)=\int_{\R^d} K(x-y)d\m(y)$. An appropriate   choice of  kernel $K$ allows to describe different phenomena of the physical model such as  aggregation, repulsion and diffusion (\cite{ccr,cpt,ehm}).  The  datum $\m_0$ is a given positive  measure on $\R^d$ representing the initial distribution of the population. Observe  that  measure solutions allow to
describe  within a unified  approach the evolution both for continuous and discrete  populations.\\
For $\b=1$, the fractional derivative $\fdz\cdot$ coincides with the standard  derivative $\pd_t \cdot$. In this case the measure solution of   problem \eqref{transport_intro}   is given by the push-forward $\m_t=\Phi^\mu_t\# \m_0$   of the initial datum $\m_0$ 
by means of the flow map $\Phi^\mu_t$ associated to the velocity field $v[\mu]$.  
In this paper,  we extend the previous results to the fractional case, providing     a well-posedness theory for  measure solutions of the problem  \eqref{transport_intro}  and an integral formula which generalizes the classical push-forward  representation formula to this setting. Indeed, we prove that 
for $\b\in(0,1)$  a measure solution of \eqref{transport_intro} is given by the integral formula 
\begin{equation}\label{push_intro}
\m_t(dx)= \int_0^\infty m^\m_s(dx)h_\b(s,t)ds=\int_0^\infty \Phi^{\m}_s\# \m_0(dx)\,h_\b(s,t)ds
\end{equation}
where  $m^\m$ is the solution of the linear transport equation
\begin{equation*}
\left\{\begin{array}{ll}
\pd_r m_s+\diver(\vv^\m(x,s) m_s)=0\quad & (x,s)\in \R^d\times\R^+, \\[8pt]
m_{s=0}=\m_0  & x\in\R^d,
\end{array}
\right.
\end{equation*}
with the velocity field given by $\vv^\m(x,s)=\int_0^\infty v[\m_r]g_\b(r,s)dr$ and $\Phi^{\m}$ the associated flow.
The kernel $g_\b(s,t)$, $h_\b(s,t)$ are the  probability density functions (PDF in short) associated to  the time-increasing, continuous stochastic processes $D_t$, $E_t$, which are a $\b$-stable subordinator  and its inverse (\cite{hku,mk,ms}). Note that, as in the classical case,  \eqref{push_intro} defines implicitly the solution of \eqref{transport_intro} since the velocity field $\vv^\m(x,s)$ depends on the solution $\mu$ of the problem.
\\
The paper is organized as follows. In Section \ref{sec:prelim}, we review  definitions and some basic properties of the subordinator   process, of  its inverse and of the associated  PDF  $g_\b$, $h_\b$. In Section \ref{sec:linear}, we study a linear transport equation with Caputo time-derivative. In  
Section \ref{sec:nonlinear} we prove the well-posedness of weak measure solutions to  problem \eqref{transport_intro}.
\section{Preliminary definitions and properties}\label{sec:prelim}
Throughout the paper, we always assume that $\b\in (0,1]$. For   $f:\R^+ \to \R$, the  Riemann-Liouville fractional integral of order $\b$ is defined by
\[
I^{\b}_{(0,t]} f(t):=\frac{1}{\G(\b)}\int_0^t f(\t)\frac{1}{(t-\t)^{1-\b}}d\t,\qquad t\in\R^+, 
\]
and the Caputo  fractional    derivative  of order $\b$  by
\[
\pd^{\b}_{(0,t]} f(t):=I^{1-\b}_{(0,t]} \left[\frac{df}{dt}(t)\right] =\frac{1}{\G(1-\b)}\int_0^t\frac{df}{dt}(\t)\frac{1}{(t-\t)^\b}d\t,\qquad t\in\R^+, 
\]
(for a   complete account of the theory of fractional derivatives, we refer to \cite{po}).\\
Fractional derivatives appear in the study of differential equations which govern the evolution of probability density functions for a class of L\'evy  processes, called stable subordinators  (\cite{hku,ms,mst}).  For $\b\in(0,1)$, a $\b$-stable subordinator is a one-dimensional, non-decreasing L\'evy process $D_t$ starting at $0$ which is self-similar , i.e. $\{D_{t},t\ge 0 \}$ has the same  finite dimensional distribution as $\{t^{1/\b}D_1,t\ge 0 \}$, and such that  the Laplace transform of $D_1$ is given by $\E(e^{-sD_1})= e^{-s^{\beta}}$  for $s\ge 0$. The inverse stable  process $\{E_t\}_{t\geq 0}$, defined as the first passage time of the process $D_t$ over the level $t$, i.e.
$$E_t = \inf\{\t>0 : D_\t > t\},$$
has sample paths which are continuous,  non-decreasing and such that $E_0 = 0$,  $E_t \to \infty$ as $t \to \infty$. 
The PDF  of the process $E_t$ is given by $h_\beta(s,t)=\frac{t}{\beta}s^{-1 - \frac{1}{\beta}}G_\b(s^{- \frac{1}{\beta}}t)$
where $G_\b(\cdot)$ is PDF  of   $D_1$.  The  function $h_\b(\cdot,t)$ is infinitely differentiable in $\R^+$,  right continuous in $0$  with 
\begin{equation}\label{eq_too}
h_\b(0^+,t)=\frac{1}{\Gamma(1-\b)} t^{-\b} \qquad \text{ for $t>0$},
\end{equation} 
and has finite $\g$-moment for any $\g>0$, given by (see \cite[Corollary 3.1]{ms} and \cite[equation (2.7)]{pi})
\begin{equation}\label{moment}
\E[E_t^\g] =\int_0^\infty s^\g\,  h_\b(s,t)ds= C(\beta,\gamma)t^{\g\beta}.
\end{equation}
   with  $C(\beta, \g)= \frac{\Gamma(\gamma + 1)}{\Gamma(\gamma \beta + 1)}$. 
In particular, this property   implies  the   identity 
\begin{equation}\label{exp}
\mathbb{E}(e^{\lambda E_t}) = \cE_{\beta}(\lambda t^{\beta}), \quad \forall \lambda \in \R,
\end{equation}
where $\cE_\beta(z)=\sum_{k=0}^{+\infty}\frac{z^k}{\G(\beta k+1)}$ is the Mittag-Leffler function of order $\beta$ (see \cite{po} for some properties of $\cE_\beta$).  
  Indeed, we have
\begin{align*}
\mathbb{E}(e^{\lambda E_t}) &= \int_0^{+\infty}e^{\lambda s}h_\beta(s,t) ds =  \sum_{k=0}^{+\infty} \lambda^k \int_0^{+\infty} \frac{s^k}{k!} h_\beta(s,t)ds \\&= \sum_{k=0}^{+\infty}\frac{\lambda^k}{k!} \frac{\Gamma(k+1)}{\Gamma(k\beta + 1)}t^{\beta k}  = \sum_{k=0}^{+\infty} \frac{(\lambda t^\beta)^k}{\Gamma(k\beta + 1)} = \cE_\beta(\lambda t^\beta).
\end{align*}
In the next proposition, we give two crucial properties of $h_\b$ we will exploit in the following  (see \cite{mst} for {\it (i)}  and \cite[Lemma 3.2]{hku} for {\it (ii)}).
\begin{proposition}\label{prp1}
	\begin{itemize}
		\item[(i)] For $t> 0$, $h_\b$ is a weak solution of
		\begin{equation}\label{eq_E}
		\fdz h_\b(r,t)=-\pd_r h_\b(r,t)-\frac{t^{-\b}}{\Gamma(1-\b)} \d_0(r), \qquad r\in \R^+ .
		\end{equation}
		\item[(ii)] Given $t>0$, the density $h_\b(\cdot,t)$ is bounded and there exist positive constants $C$, $K$, non depending on $r$, such that
		\begin{equation}\label{decay}
		  h_\b(r,t)\le \frac{C}{\b} t^{-\frac{\b}{2}\frac{1}{1-\b}}r^{-1 + \frac{1}{2(1-\beta)}} e^{-Kt^{\frac{\b}{\b-1}}r^{\frac{1} {1-\b}}}
		\end{equation}
		for $r$ large enough.
	\end{itemize}
\end{proposition}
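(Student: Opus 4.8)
The plan is to prove (i) by a Laplace transform in the space variable $r$, reducing it to a defining property of the Mittag-Leffler function, and (ii) from the explicit form of $h_\b$ in terms of the one-sided stable density $G_\b$. The analytically delicate ingredients will be the commutation of the nonlocal operator $\fdz$ with the $r$-integral in (i) and the small-argument asymptotics of $G_\b$ in (ii).

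For (i), set $\psi(p,t):=\int_0^\infty e^{-pr}h_\b(r,t)\,dr$; by \eqref{exp} this equals $\E(e^{-pE_t})=\cE_\b(-p\,t^\b)$. Differentiating the series $\cE_\b(z)=\sum_{k\ge0}z^k/\G(\b k+1)$ term by term and using $\fdz t^{\b k}=\Gamma(\b k+1)\,t^{\b(k-1)}/\G(\b(k-1)+1)$ for $k\ge1$ together with $\fdz 1=0$ gives the classical identity $\fdz\cE_\b(\mu t^\b)=\mu\,\cE_\b(\mu t^\b)$. Hence, once the interchange of $\fdz$ with $\int_0^\infty e^{-pr}(\cdot)\,dr$ is justified,
\[
\int_0^\infty e^{-pr}\,\fdz h_\b(r,t)\,dr=\fdz\psi(p,t)=-p\,\psi(p,t).
\]
On the other hand, integrating by parts — the boundary term at $r=0$ equals $h_\b(0^+,t)=t^{-\b}/\G(1-\b)$ by \eqref{eq_too}, and $h_\b(r,t)\to0$ as $r\to\infty$ by \eqref{decay} — and evaluating $\d_0$ at the endpoint,
\[
\int_0^\infty e^{-pr}\left(-\pd_r h_\b(r,t)-\frac{t^{-\b}}{\G(1-\b)}\d_0(r)\right)dr=\frac{t^{-\b}}{\G(1-\b)}-p\,\psi(p,t)-\frac{t^{-\b}}{\G(1-\b)}=-p\,\psi(p,t),
\]
so both sides of \eqref{eq_E} have the same Laplace transform for every $p>0$ and, by injectivity, coincide; this proves (i). (Equivalently one may transform in $t$, using $\int_0^\infty e^{-\l t}h_\b(r,t)\,dt=\l^{\b-1}e^{-r\l^\b}$ and the Caputo transform rule, in which case the $\d_0(r)$ source is the manifestation of the initial concentration $h_\b(\cdot,0^+)=\d_0$.) The one step requiring care is the commutation of the nonlocal operator $\fdz$ with the $r$-integral: this is where the precise notion of weak solution enters and where the decay \eqref{decay} together with the smoothness of $h_\b(r,\cdot)$ is used to apply Fubini and dominated convergence.

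For (ii), I start from $h_\b(r,t)=\frac{t}{\b}\,r^{-1-\frac{1}{\b}}\,G_\b(r^{-\frac{1}{\b}}t)$. Since $G_\b$ is continuous on $\R^+$, vanishes super-exponentially as its argument tends to $0^+$ and behaves like $\frac{\b}{\G(1-\b)}\,y^{-1-\b}$ as $y\to\infty$, it is bounded on $\R^+$; combined with the finite limits $h_\b(0^+,t)=t^{-\b}/\G(1-\b)$ and $h_\b(r,t)\to0$ as $r\to\infty$, this gives boundedness of $h_\b(\cdot,t)$. For the tail estimate I would use the classical small-argument asymptotics of the $\b$-stable density — obtained by a steepest-descent analysis of the Bromwich inversion of $\E(e^{-sD_1})=e^{-s^\b}$ — namely that there exist $A,B>0$, depending only on $\b$, with $G_\b(z)\le A\,z^{-\frac{2-\b}{2(1-\b)}}e^{-B z^{-\frac{\b}{1-\b}}}$ for $z$ small. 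Substituting $z=r^{-1/\b}t$ and simplifying via the identities $-1-\frac{1}{\b}+\frac{2-\b}{2\b(1-\b)}=-1+\frac{1}{2(1-\b)}$, $1-\frac{2-\b}{2(1-\b)}=-\frac{\b}{2(1-\b)}$ and $z^{-\frac{\b}{1-\b}}=t^{\frac{\b}{\b-1}}r^{\frac{1}{1-\b}}$ turns this bound into \eqref{decay} for $r$ large, with $K=B$ and $C$ absorbing $A$. The main obstacle is the stable-density asymptotic itself, which requires a delicate saddle-point estimate of a contour integral; this is classical and I would quote it.
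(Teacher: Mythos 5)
Your proposal is correct, but it necessarily takes a different route from the paper, because the paper gives no proof of this proposition at all: it simply cites Meerschaert--Straka \cite{mst} for (i) and \cite[Lemma 3.2]{hku} for (ii). Your derivation of (i) via the Laplace transform in $r$ is sound: $\int_0^\infty e^{-pr}h_\b(r,t)\,dr=\cE_\b(-pt^\b)$ by \eqref{exp}, the eigenfunction identity $\fdz \cE_\b(\mu t^\b)=\mu\,\cE_\b(\mu t^\b)$ is the classical term-by-term computation, the boundary term in the integration by parts is exactly \eqref{eq_too}, and your endpoint convention for $\d_0(r)$ (full mass at $r=0$) matches the one the paper uses when it applies \eqref{eq_E} later. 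Your bookkeeping in (ii) is also exactly right: with $G_\b(z)\le A z^{-\frac{2-\b}{2(1-\b)}}e^{-Bz^{-\b/(1-\b)}}$ for small $z$ and $z=r^{-1/\b}t$, the exponent identities you list do reproduce the powers $r^{-1+\frac{1}{2(1-\b)}}$, $t^{-\frac{\b}{2(1-\b)}}$ and the exponential $e^{-Kt^{\b/(\b-1)}r^{1/(1-\b)}}$ of \eqref{decay}, while the heavy-tail behaviour $G_\b(y)\sim\frac{\b}{\G(1-\b)}y^{-1-\b}$ is consistent with \eqref{eq_too} and yields boundedness. What your approach buys is a self-contained argument whose only imported ingredient is the classical left-tail (saddle-point) estimate for the one-sided stable density; what the paper's citations buy is that the two technical points you flag are discharged elsewhere. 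Those two points are the only places where your write-up is still schematic: (a) the interchange of $\fdz$ with $\int_0^\infty e^{-pr}(\cdot)\,dr$ requires an explicit dominating bound for $\partial_\t h_\b(r,\t)(t-\t)^{-\b}e^{-pr}$, which you can extract from the scaling form $h_\b(r,t)=\frac{t}{\b}r^{-1-1/\b}G_\b(r^{-1/\b}t)$ together with the asymptotics of $G_\b$ and $G_\b'$ at $0$ and at infinity (using \eqref{decay} here is not circular, since (ii) is proved independently); and (b) agreement of the Laplace transforms for all $p>0$ only tests the identity against exponentials, so to reach the weak formulation you must invoke injectivity of the Laplace transform for the relevant class of functions/measures in $r$, which is legitimate given the decay but should be said. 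Neither is a gap in the ideas, only in the detail.
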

 \begin{lemma}\label{lemm1}
Given $\lambda > 0$, the function   $\Psi(t) := \mathbb{E}[\lambda E_t e^{\lambda E_t}]$, where   $ E_t$ is the inverse stable subordinator, is well-defined for every $t\geq 0.$
\end{lemma}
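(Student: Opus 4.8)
The plan is to reduce the claim to the convergence of an explicit power series built from the moment formula \eqref{moment}. Since $E_0=0$ almost surely, we have $\Psi(0)=0$ and there is nothing to prove at $t=0$; so I would fix $t>0$ and $\l>0$ and work with the density representation $\Psi(t)=\int_0^\infty \l s\, e^{\l s}\,h_\b(s,t)\,ds$, whose integrand is nonnegative because $\l>0$ and $h_\b(\cdot,t)\ge 0$.

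The first step is to expand $\l s\,e^{\l s}=\sum_{k\ge 0}\frac{\l^{k+1}s^{k+1}}{k!}$ and integrate term by term against $h_\b(\cdot,t)$; the interchange of sum and integral is legitimate by Tonelli's theorem, since every term is nonnegative. This gives $\Psi(t)=\sum_{k\ge 0}\frac{\l^{k+1}}{k!}\,\E[E_t^{k+1}]$. Plugging in \eqref{moment} with $\g=k+1$, and using $C(\b,k+1)=\G(k+2)/\G((k+1)\b+1)=(k+1)!/\G((k+1)\b+1)$, one obtains after reindexing $j=k+1$
$$\Psi(t)=\sum_{j\ge 1}\frac{j\,(\l t^\b)^{j}}{\G(j\b+1)}=(\l t^\b)\,\cE_\b'(\l t^\b).$$

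The last step is to verify that this series converges for every value of $z:=\l t^\b\ge 0$. Since $\cE_\b$ is an entire function, so is $z\mapsto z\,\cE_\b'(z)$, hence the series is finite for all real $z$; equivalently, one checks directly that the ratio of consecutive terms equals $\tfrac{j+1}{j}\cdot\tfrac{\G(j\b+1)}{\G((j+1)\b+1)}\,z\to 0$ as $j\to\infty$, because $\G((j+1)\b+1)/\G(j\b+1)\to\infty$ by Stirling's formula. This yields $\Psi(t)<\infty$ for every $t\ge 0$.

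I do not expect a serious obstacle here: the only point needing a little care is the termwise integration, which is immediate from nonnegativity. An alternative route, avoiding the series, would be to bound the integrand directly — $h_\b(\cdot,t)$ is bounded near the origin by Proposition \ref{prp1}\,(ii), while for large $s$ the estimate \eqref{decay} shows that $h_\b(s,t)$ decays like $e^{-Kt^{\b/(\b-1)}s^{1/(1-\b)}}$, and since $1/(1-\b)>1$ this superexponential decay dominates the factor $\l s\,e^{\l s}$ — which again gives integrability, though in a less explicit form.
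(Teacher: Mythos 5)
Your proof is correct, but it follows a genuinely different route from the paper. You compute $\Psi(t)$ explicitly: expanding $\l s e^{\l s}$ in a power series, swapping sum and integral by Tonelli (all terms nonnegative), and invoking the moment formula \eqref{moment} with $\g=k+1$, you obtain the closed form $\Psi(t)=\l t^\b\,\cE_\b'(\l t^\b)$, whose finiteness follows since $\cE_\b$ is entire (or by your ratio-test computation). This is exactly the same mechanism the paper uses earlier to derive \eqref{exp}, just applied to $\l s e^{\l s}$ instead of $e^{\l s}$, and it is self-contained and elementary. The paper instead applies the Caputo derivative $\fdz$ to $\Psi$, integrates by parts using equation \eqref{eq_E} together with the boundary behaviour \eqref{eq_too} and the decay estimate \eqref{decay}, arrives at the linear fractional ODE $\fdz\Psi=\l\cE_\b(\l t^\b)+\l\Psi$, and concludes well-definedness from the existence and uniqueness theory for such equations (Podlubny, Theorem 3.3). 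Your argument buys an explicit formula and avoids the slightly delicate point of differentiating $\Psi$ before its finiteness is established; the paper's argument buys the fractional ODE \eqref{fode} itself, which encodes how $\Psi$ evolves, at the cost of relying on external ODE theory. Your alternative remark — bounding the integrand directly via the boundedness of $h_\b(\cdot,t)$ near the origin and the superexponential decay \eqref{decay} at infinity, using $1/(1-\b)>1$ — is also valid and is essentially the integrability check the paper performs inside its integration by parts.
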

\begin{proof}
Since $ E_{t} \equiv 0$ for $t=0$, we immediately get that $\Psi(0) = 0.$ For $t > 0$, we have by definition
\begin{equation}
\Psi(t) = \int_0^{ \infty} \lambda s e^{\lambda s} h_{\beta}(s,t) ds.
\end{equation}
Hence, by equation \eqref{eq_E}, we get
\begin{align*}
\fdz \Psi (t) &= \int_0^{ \infty }\l s e^{\l s} \fdz h_\b (s,t) ds \\
&= - \int_0^{+\infty} \l s e^{\l s} \partial_s h_\b(s,t) ds - \frac{t^{-\b}}{\G(1-\b)} \langle \delta_0, \l s e^{\l s}\rangle\\
&= - \lambda s e^{\l s} h_\beta (s,t) |_{0}^{+\infty} + \int_{0}^{+\infty} (\l e^{\l s} + \l^2 s e^{\l s}) h_\b(s,t) ds.
\end{align*}
Fixed   $t>0$, by \eqref{eq_too}  the term $\lambda s e^{\l s} h_\beta (s,t)$ converges to 0 for $s \to 0^+$. Moreover, by   inequality \eqref{decay}, we have for $s \to +\infty$ 
\begin{align*}
0 \leq \l s e^{\l s} h_\beta (s,t) \leq \l \frac{C}{\beta} t^{-\frac{\b}{2}\frac{1}{1-\b}}s^{  \frac{1}{2(1-\beta)}} e^{\l s -Kt^{\frac{\b}{\b-1}}s^{\frac{1} {1-\b}}}.
\end{align*} 
Since $\b \in (0,1)$, it follows that $\frac{1}{(1-\b)}>1$ and therefore $\lambda s e^{\l s} h_\beta (s,t)$ converges to 0 also for $s\to +\infty$. Hence, by \eqref{exp},  we obtain 
\begin{equation}\label{fode}
\fdz \Psi (t) = \l \cE_\beta(\l t^\b) + \l \Psi(t).
\end{equation}
By the theory of linear fractional differential equations (see \cite[Theorem 3.3]{po}), 
the initial value problem
\[
\left\{ 
\begin{array}{ll}
\fdz \psi (t) = \l \cE_\beta(\l t^\b) + \l \psi(t),\qquad t\in (0,T)\\
\psi(0)=0,
\end{array}
\right.\]
admits a unique continuous solution in $[0,T]$, for any $T>0$. Since $\Psi: \R^+ \to \R^+$ solves the initial value  problem, then it is well defined for any $t\ge 0$.
\end{proof}
%
We introduce a space of measures with an appropriate norm where we consider    solutions to the measure-valued transport equation (we  refer to \cite{ags,ehm} for a comprehensive account of the theory). 
We denote by $\cM(\R^d)$ the space of finite Borel measures on $\R^d$ and by $\cM^{+}(\R^d)$ the convex cone of the positive measures in $\cM(\R^d)$. For $\mu\in\cM(\R^d)$ and a bounded measurable function $\varphi:\R^d\to\R$ we write
$$ \dual{\mu}{\varphi}:=\int_{\R^d}\varphi\,d\mu. $$\\
Given a Borel measurable vector field $\Phi:\R^d\to\R^d$,  we denote by $\Phi\#\mu\in\cM(\R^d)$ the  push-forward of the measure $\mu$ under the action of $\Phi$, defined by
$$ (\Phi\#\mu)(E):=\mu(\Phi^{-1}(E)), \qquad \forall\,E\in\cB(\R^d), $$
where $\cB(\R^d)$ is the class of Borel sets in $\R^d$.
Observe that $\dual{\Phi\#\mu}{\varphi}=\dual{\mu}{\varphi\circ\Phi}$.
We denote by $BL(\R^d)$ the Banach space of bounded and Lipschitz continuous functions $\varphi:\R^d\to\R$
 equipped with the norm
$$\|\varphi\|_{BL} = \|\varphi\|_{\infty} + |\varphi|_{L},$$
where $|\varphi|_L$ is the Lipschitz seminorm, and  we introduce a norm in $\cM(\R^d)$ by taking the dual norm of $\norm{\cdot}_{BL}$:
$$ \|\mu\|_{BL}^*:=\sup_{\substack{\phi\in BL(\R^d) \\ \|\phi\|_{BL} \leq 1}}\dual{\mu}{\phi}. $$
 Indeed, given the norm $\|\cdot\|_{BL}$ on $BL(\R^d)$, the dual norm $\|\cdot\|_{BL}^*$ is, trivially, defined on the dual space $BL(\R^d)^*$. Since the map $I_\mu(\phi) := \langle \mu, \phi\rangle$ defines a linear embedding of $\cM(\R^d)$ into $BL(\R^d)^*$, we can induce a norm, denoted by the same symbol, on $\cM(\R^d)$.
The previous norm is said {\em Bounded Lipschitz norm}   on $\cM(\R^d)$. \\
The space $(\cM(\R^d),\,\|\cdot\|_{BL}^*)$ is in general not complete, hence it is customary to consider its completion $\overline{\cM(\R^d)}^{\normBL{\cdot}}$ with respect to the dual norm. However, the cone $\cM^+(\R^d)$, which is a closed subset of the completion of  $\cM(\R^d)$ in the weak topology, is complete, although it is  not  a Banach space because it is not a vector space. Since we consider only positive measures,   we restrict our attention to the complete metric space $(\cM^+(\R^d),\,d_{BL})$ with the    distance $d_{BL}$ induced by the norm $\|\cdot\|_{BL}^*.$
\begin{remark}
  In	\cite{ags,cpt},   solutions to transport equations are considered in the space of probability measures $\mathcal{P}_1(\R^d)$ with the norm induced by  the  Wasserstein distance $W_1$. Note that, by the Kantorovich's dual representation theorem, it follows that 
\begin{equation}\label{equiv_Kant}
\|\mu - \eta\|_{BL}^* \leq W_1(\mu, \eta),\quad \forall \mu, \eta \in \mathcal{P}_1(\R^d).
\end{equation}
Since  we consider transport equations with a source term in Remark \ref{source} and, in this case, conservation of the mass in general does not hold, we prefer to opt for the dual Bounded Lipschitz norm $\|\cdot\|_{BL}^*$ to have an unified framework.\\
	The distance induced in $\cM(\R^d)$ by the \emph{total variation norm}:
	$$ \norm{\mu}_{TV}:=\sup_{\substack{\varphi\in C_b(\R^d) \\ \norm{\varphi}_\infty\leq 1}}\dual{\mu}{\varphi}, $$
	where $C_b(\R^d)$ is the space of bounded continuous function on $\R^d$, is another metric frequently used for measures. However, as observed in \cite{cpt},  it may not be fully suited to transport problems where one wants to measure the distance between flowing mass distributions.
\end{remark}

\section{Linear transport equations with Caputo time derivative}\label{sec:linear}
In order to explain the construction of a solution to \eqref{transport_intro} in a simpler setting, in this section we  consider the case of a linear transport equation with a Caputo time-derivative
\begin{equation}\label{caputo_linear}
\left\{\begin{array}{ll}
\fdz \m+\diver(v(x,t)\,\m)=0\quad & (x,t)\in \R^d\times (0,T), \\[8pt]
\m_{t=0}=\m_0  & x\in\R^d,
\end{array}
\right.
\end{equation}
where $\m_0\in\cP$ and $T > 0$, and we prove that the previous problem is well posed in measure theoretic sense. We start by  introducing a notion of measure solution to \eqref{caputo_linear}.
\begin{definition}\label{def_weak_linear}
	A solution to \eqref{caputo_linear} is a map $\m\in C( [0,T],\cP)$ such that $\m_{t=0}=\m_0$ and for any $f\in C^\infty_c(\R^d)$ and for almost any $t\in [0,T]$,
	\[
	\fdz\int_{\R^d} f(x) \m_t(dx)=\int_{\R^d}Df(x)\cdot v(x,t) \,  \m_t(d x).
	\] 
	Equivalently, since $\fIz[\fdz \phi](t)=\phi(t)-\phi(0)$ for an absolute continuous function $\phi$, $\m_t$ is a solution to \eqref{caputo_linear} if
	\[	  
	\int_{\R^d} f(x) \m_t(dx)= \int_{\R^d} f(x) \m_0(dx)
	+ \fIz\left[ \int_{\R^d}Df(x)\cdot v(x,t)\, \m_t(d x)\right] .
	\]
\end{definition}
We assume that $v:\R^d\times\R^+\to\R^d$ is a given vector field satisfying
\begin{itemize}
	\item [\textbf{(H1)}] $v$ is  bounded by  $V_0>0$, measurable in $t$  and    there exists  $L\in \R^+$	such that,  for any $x_1,x_2 \in \R^d$ and $t\in\R^+$, it holds 	 
	$$|v(x_1,t) - v(x_2,t)|\leq L   |x_1 - x_2|.$$
\end{itemize}
Associated to \eqref{caputo_linear}, we consider the linear problem with standard time-derivative
\begin{equation}\label{dual_linear}
\left\{\begin{array}{ll}
\pd_t m+\diver(\vv(x,t)\,m)=0\quad & (x,t)\in \R^d\times \R^+, \\[8pt]
m_{t=0}=\m_0  & x\in\R^d.
\end{array}
\right.
\end{equation}
where the vector field $\vv:\R^d\times\R^+\to\R^d$ is defined by
\begin{equation}\label{dual_vel_linear}
\vv(x,t):=\E[v(x,D_t)]=\int_0^\infty v(x,s)g_\b(s,t)ds  
\end{equation}
(the process $D_t$ is the $\b$-stable subordinator  defined in Section \ref{sec:prelim} and $g_\b$ the corresponding PDF). The vector field $\tilde v$ is continuous in $t$ since
\begin{equation}\label{pdf}
g_\b(s,t)= \frac{1}{t^{1/\b}}G_\b(\frac{s}{t^{1/\b}})
\end{equation} 
and $G_\b$   is infinitely differentiable in $\R^+$ (see \cite{ms}).
For any $x_1,x_2 \in \R^d$ and  $t\in\R^+$, we have 
\begin{equation}\label{Ltilde}
 |\vv(x_1,t) - \vv(x_2,t)| \le \int_0^\infty |v(x_1,s)-v(x_2,s)|g_\b(s,t)ds \leq L  |x_1 - x_2|.
\end{equation} 
Hence  the corresponding flow 
\begin{equation*}
\Phi_t(x,0):=x+\int_0^t \vv(\Phi_s(x,0),s)\,ds, 
\end{equation*}
giving  the trajectory issuing from the point $x$ at time $0$ and arriving at the point $\Phi_t(x,\,0)$ at time $t$, is well defined.\\
It is well known that the unique measure solution to \eqref{dual_linear}  is defined by the push-forward $m_t=\Phi_t\# \m_0$ of the initial datum $\m_0$, i.e.  $\dual{m_t}{f}=\int_{\R^d} f(\Phi_t(x,0))\m_0(dx)$ for any $f\in C^\infty_c(\R^d)$ (see for example  \cite{cpt,pr}).  Because problem \eqref{dual_linear} is well posed in measure theoretic sense, we get the corresponding result for problem \eqref{caputo_linear}.
\begin{proposition}\label{existence_linear}
	Assume \textbf{(H1)}. 
For any $T>0$, the Cauchy	problem  \eqref{caputo_linear}  admits a  solution $\m\in C([0,T],\cP)$ given by 
$\m_t(dx)= \E[m_{E_t}(dx)]$, i.e.
\begin{equation}\label{integral_l}
\m_t(dx)= \int_0^\infty m_s(dx)h_\b(s,t)ds=\int_0^\infty \Phi_s\#\m_0(dx) \, h_\b(s,t)ds,
\end{equation}
where  $m\in C(\R^+,\cP)$	is the  solution of the linear transport problem \eqref{dual_linear}.  
Moreover, 
let
$\m^1,\m^2$ be two solutions of \eqref{caputo_linear} corresponding to the initial data $\m^1_0, \m^2_0$ . Then, there is a constant $C=C(T)$ such that
\begin{equation}\label{uniqueness_linear}
\sup_{[0,T]}\wass{\m^2_t}{\m^1_t} \leq  C \wass{\m^2_0}{\m^1_0}.
\end{equation}
\end{proposition}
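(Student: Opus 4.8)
The plan is to produce the solution explicitly through the subordination formula \eqref{integral_l} and to check by hand that it satisfies Definition \ref{def_weak_linear}, deducing uniqueness afterwards from the Lipschitz bound on the associated flow. By \textbf{(H1)} and the fact that $g_\b(\cdot,s)$ is a probability density, \eqref{Ltilde} shows that $\vv$ is bounded by $V_0$ and $L$-Lipschitz in $x$ uniformly in $s$, while \eqref{pdf} and the smoothness of $G_\b$ make $\vv$ continuous in $s$; hence the flow $\Phi_s$ is well defined, $\mathrm{Lip}(\Phi_s)\le e^{Ls}$ by Gr\"onwall, and $m_s:=\Phi_s\#\m_0$ is the unique measure solution of \eqref{dual_linear}, with $m\in C(\R^+,\cP)$. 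I would then set $\m_t:=\int_0^\infty m_s\,h_\b(s,t)\,ds$, which is a well-defined element of $\cP$ (push-forward preserves mass and $h_\b(\cdot,t)$ is a probability density). The map $t\mapsto\m_t$ is $d_{BL}$-continuous on $(0,T]$ — by dominated convergence using the explicit form of $h_\b$ and the decay \eqref{decay}, or by sample-path continuity of $E_t$ combined with boundedness of $s\mapsto\langle m_s,f\rangle$ — and $\m_t\to\m_0$ as $t\to 0^+$, since the flow moves at speed at most $V_0$ and therefore $|\langle\m_t-\m_0,f\rangle|\le V_0\,\m_0(\R^d)\,\E[E_t]=V_0\,\m_0(\R^d)\,C(\b,1)\,t^\b\to0$ by \eqref{moment} for every $f$ with $\|f\|_{BL}\le1$.

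For the fractional identity, fix $f\in C_c^\infty(\R^d)$ and set $F(s):=\langle m_s,f\rangle$; since $m$ solves \eqref{dual_linear}, $F\in C^1(\R^+)$ with $F'(s)=\langle m_s,Df\cdot\vv(\cdot,s)\rangle$, and $F,F'$ are bounded. Writing $\langle\m_t,f\rangle=\int_0^\infty F(s)\,h_\b(s,t)\,ds$, I would differentiate under the integral sign,
\[
\fdz\langle\m_t,f\rangle=\int_0^\infty F(s)\,\fdz h_\b(s,t)\,ds ,
\]
insert the governing equation \eqref{eq_E} for $h_\b$, and integrate by parts in $s$: the boundary term at $s=+\infty$ vanishes by \eqref{decay}, and the one at $s=0^+$ cancels the $\d_0$-contribution thanks to \eqref{eq_too}. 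This gives $\fdz\langle\m_t,f\rangle=\int_0^\infty F'(s)\,h_\b(s,t)\,ds=\int_0^\infty\langle m_s,Df\cdot\vv(\cdot,s)\rangle\,h_\b(s,t)\,ds$, and the definition \eqref{dual_vel_linear} of $\vv$ is chosen precisely so that, after expanding $\vv$ and applying Fubini, the last expression becomes $\int_{\R^d}Df(x)\cdot v(x,t)\,\m_t(dx)=\langle\m_t,Df\cdot v(\cdot,t)\rangle$, i.e.\ the equation required in Definition \ref{def_weak_linear}. The interchange of $\fdz$ with $\int_0^\infty(\cdot)\,ds$ is the one step that is not mere bookkeeping, and it is justified exactly as in the proof of Lemma \ref{lemm1}: $F$ is bounded, $\partial_t h_\b(s,t)$ obeys a bound of the form \eqref{decay} uniformly for $t$ in compact subsets of $(0,T]$, and the singularity at $t=0^+$ is integrable.

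Finally, since $\vv$ and hence $\Phi$ do not depend on the datum in the linear problem, every solution produced as above is of the form $\m^i_t=\int_0^\infty\Phi_s\#\m^i_0\,h_\b(s,t)\,ds$. Using $\langle\Phi_s\#\nu,f\rangle=\langle\nu,f\circ\Phi_s\rangle$ and $\|f\circ\Phi_s\|_{BL}\le e^{Ls}\|f\|_{BL}$, for $\|f\|_{BL}\le1$ we get
\[
\langle\m^1_t-\m^2_t,f\rangle=\int_0^\infty\langle\m^1_0-\m^2_0,f\circ\Phi_s\rangle\,h_\b(s,t)\,ds\le\wass{\m^1_0}{\m^2_0}\int_0^\infty e^{Ls}\,h_\b(s,t)\,ds ,
\]
and by \eqref{exp} the last integral equals $\cE_\b(Lt^\b)\le\cE_\b(LT^\b)=:C(T)$ on $[0,T]$; taking the supremum over such $f$ and over $t\in[0,T]$ yields \eqref{uniqueness_linear}, and in particular uniqueness. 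To upgrade \eqref{uniqueness_linear} to two a priori arbitrary weak solutions, one runs the computation of the previous paragraph in reverse to show that any weak solution must coincide with the representation \eqref{integral_l}. The only genuinely delicate point in the whole proof is the justification of differentiating under the subordination integral; everything else reduces to Gr\"onwall's inequality and the elementary estimates \eqref{moment} and \eqref{decay}.
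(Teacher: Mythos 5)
Your argument is essentially the paper's own proof: the same subordination formula \eqref{integral_l}, the same interchange of $\fdz$ with the $s$-integral followed by integration by parts using \eqref{eq_E}, \eqref{decay} and \eqref{eq_too}, the same identification of the nonlocal velocity term through the definition \eqref{dual_vel_linear} (which the paper phrases probabilistically via $D_{E_t}=t$ and $\E[v(x,D_r)]=\vv(x,r)$ rather than your Fubini expansion), and the same flow-Lipschitz/Mittag--Leffler stability bound, giving $C(T)=\cE_\b(LT^\b)$ up to a factor $L$. The only cosmetic difference is the continuity in time: the paper proves the uniform H\"older estimate $\wass{\m_t}{\m_{t'}}\le V_0\m_0(\R^d)C(\b,1)|t-t'|^\b$ directly from $\E[E_t-E_{t'}]$ — an estimate your computation at $t'=0$ already contains — instead of appealing to dominated convergence, and like the paper you establish \eqref{uniqueness_linear} only for solutions given by the representation formula.
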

\begin{proof}
	By assumption \textbf{(H1)}, the flow $\Phi_s$ exists for any $s\in\R^+$ and the push-forward $m_s=\Phi_s\# \m_0$ is globally well defined. Therefore also    formula \eqref{integral_l} is well defined for any $t\in [0,T]$. Moreover $\m_t$, for any $t\in\R^+$, is a finite measure on $\R^d$ since we have
	\begin{equation*} 
	\m_t(\R^d)=\int_{\R^d}\m_t(dx)=\int_0^\infty \int_{\R^d}   m_{r}(dx) h_\b(r,t)dr=\int_0^\infty \int_{\R^d}   \m_0(dx)  h_\b(r,t)dr=\m_0(\R^d).
	\end{equation*}
We claim  that \eqref{integral_l} defines a weak solution to \eqref{caputo_linear}.  Since $h_\b$ satisfies \eqref{eq_E}, \eqref{decay},  we have
	\begin{align*}
	&\fdz\left( \int_{\R^d}f(x)\,\m_t(dx)\right)=\fdz \left( \int_{0}^{\infty}  \int_{\R^d}f(x)\,m_r(dx)h_\b(r,t)\,dr\right) \\
	&= \int_{0}^{\infty}\left( \int_{\R^d}f(x)\,m_r(dx)\right)\fdz h_\b(r,t)\,dr=-\int_{0}^{\infty}\left( \int_{\R^d}f(x)\,m_r(dx)\right)\pd_r h_\b(r,t)\,dr\\
	&
 - \langle \left( \int_{\R^d}f(x)\,m_r(dx)\right) \frac{t^{-\b}}{\Gamma(1-\b)}, \d_0(r)\rangle
	=\int_{0}^{\infty} \frac{d}{dr}\left(\int_{\R^d}f(x)\,m_r(dx)\right) h_\b(r,t)\,dr\\
	& -\left[ h_\b(r,t)\int_{\R^d}f(x)\,m_r(dx)\right]_{r=0}^{r=\infty}-\frac{t^{-\b}}{\Gamma(1-\b)}\int_{\R^d}f(x)\,m_0(dx)\\
	& =\int_{0}^{\infty} \frac{d}{dr}\left(\int_{\R^d}f(x)\,m_r(dx)\right) h_\b(r,t)\,dr.
 	\end{align*}
	Moreover, 	since $D_{E_t}=t$ for any $t$ and recalling that $h_\b(\cdot,t)$, $g_\b(\cdot,t)$ are the PDFs of  $E_t$, $D_t$, respectively, we have 
	\begin{align*}
	&\int_{\R^d}Df(x)\cdot v(x,t)\,  \m_t(d x)=\E\left[\int_{\R^d}Df(x)\cdot v(x,{D_{E_t}})\,  m_{E_t}(d x)\right]\\ &=\E\left[\int_0^\infty\int_{\R^d}Df(x)\cdot v(x,{D_r})\,  m_r(d x)h_\b(r,t)dr\right]\\
	&	=	\int_0^\infty\int_{\R^d}Df(x)\cdot \E\left[v(x,{D_r})\right]m_r(dx) h_\b(r,t)dr =	\int_0^\infty\left[ \int_{\R^d}Df(x)\cdot \vv(x,r)\right]m_r(dx) h_\b(r,t)dr.
	\end{align*}
	Replacing the previous  identities in 
	Definition \ref{def_weak_linear}, we get that $\m$ is a solution if
	\[  
	\int_{0}^{\infty}\left( \frac{d}{dr}\int_{\R^d}f(x)\,m_r(dx) -\int_{\R^d} Df(x)\cdot  \vv(x,r)\,m_r(dx)\right) h_\b(r,t)\,dr=0.
	\]
	Recalling that $m$ is a weak solution to \eqref{dual_linear}, i.e.
	for any $f\in C^\infty_c(\R^d)$ and for almost any $t\in [0,T]$
	\[
	\frac{d}{dr}\int_{\R^d} f(x)  m_r(dx)=\int_{\R^d}Df(x)\cdot \vv(x,r)\,  m_r(d x),
	\] 
	we get the claim.\\
To prove that $\m$ is continuous with respect to $t$, we estimate $\wass{\m_t}{\m_{t'}}$ for $0\le t'<t$. For $f\in \Lip$ such that $\|f\|_{BL}\le 1$ we have
\begin{equation}\label{m2_l}
\begin{split}
 \dual{\m_t-\m_{t'}}{f}&=\int_0^\infty  \left( \dual{m_{r}}{f}h_\b(r,t)-\dual{m_{r}}{f}h_\b(r,t')\right)dr=\int_0^\infty   \dual{m_{r}}{f}\left(h_\b(r,t)-h_\b(r,t')\right)dr.
 \end{split}
\end{equation}
By  the estimate for $r,r'\in\R^+$,
\begin{equation}\label{m3_l}
\begin{split}
&\dual{m_r-m_{r'}}{f}=\int_{\R^d}(f(\Phi_r(x,0))-f(\Phi_{r'}(x,0))\m_0(dx)
\\&\le \int_{\R^d}\left|\int_0^r\vv(\Phi_s(x,0),s)-\int_0^{r'}\vv(\Phi_s(x,0),s)\right|\m_0(dx) \le V_0\m_0(\R^d)|r-r'|,
\end{split}
\end{equation}
we get for $0 \leq t' < t$
\begin{equation}\label{m3_I2}
\begin{split}
&\int_0^\infty   \dual{m_{r}}{f}\left(h_\b(r,t)-h_\b(r,t')\right)dr  =
\E\left[\dual{m_{E_t} -m_{E_{t'}}}{f} \right]\\ &\le V_0\m_0(\R^d) \E\left[ E_t-E_{t'}\right] =V_0\m_0(\R^d)C(\b,1) (t^\b-t'^\b)\le V_0\m_0(\R^d)C(\b,1) |t-t'|^\b.
\end{split}
\end{equation}
Replacing the previous estimates in \eqref{m2_l}, we obtain
for the arbitrariness of $f$
\[
\wass{\m_t}{\m_{t'}}\le   C(\beta,1)V_0\m_0(\R^d)  |t-t'|^\b.
\]
We finally prove \eqref{uniqueness_linear}. 
Let $f\in \Lip$ be such that $\|f\|_{BL}\le 1$. By Gronwall's inequality,  
the function $L^{-1}f(\Phi_s(x,0))e^{-Ls}$ is $1$-Lipschitz. 
Then
\begin{align*}
\dual{\m^2_t-\m^1_t}{f} &= \int_0^\infty \dual{m^2_s-m^1_s}{f}  h_\b(s,t)ds\\
&=  \int_0^\infty \int_{\R^d}f(\Phi_s(x,0)) (\m_0^2(dx)-\m_0^1(dx)) h_\b(s,t)ds\\
&\le \wass{\m_0^2}{\m_0^1}\int_0^\infty Le^{Ls} h_\b(s,t)ds\\
&\le 
  \wass{\m_0^2}{\m_0^1} \int_0^\infty L e^{Ls}h_\b(s,t) = L\cE_\b(L t^{\b})\wass{\m_0^2}{\m_0^1}  
\end{align*}
where we have used \eqref{exp} in the last equality.
\end{proof}
\begin{remark}
If $\m_0=\d_{x_0}$, then the solution of \eqref{dual_linear}  is given by $\d_{\Phi_{t}(x_0,0)}$, while  the solution of \eqref{caputo_linear}   by 	$\m_t=\E\left[ \d_{\Phi_{E_t}(x_0,0)}\right]$.
We can interpret this formula in the following way: for the single particle the standard time $t$ is replaced by an internal clock $E_t$. The sample of the process $E_t$ can  be constant on some interval, corresponding to trapping events in the motion, and  assume arbitrarily large values, but
with a probability decaying exponentially to 0 (see \eqref{decay}). The    solution of the transport equation \eqref{caputo_linear} is obtained by averaging with respect to the internal clock  $E_t$. 
\end{remark}

\begin{remark}
Assume that the velocity $v$ is a given vector field in $L^1(\R^+;L^\infty(\R^d))^d$  satisfying the One-Sided Lipschitz (OSL) condition
\[  (v(x,t)-v(y,t))\cdot (x-y)\le \alpha(t)|x-y|\] 
for $\alpha\in  L^\infty(\R^+)$. 
Because of the weak regularity of $v$, it is natural to consider the characteristic flow $\Phi$ associated to $v$ in the sense of Filippov.
In \cite{pr} it is proved  that the push-forward of the initial datum by means of the Filippov flow $\Phi$ gives a unique measure solution  to the  corresponding transport problem with a  standard time-derivative. \\
Since the velocity $\vv$ defined in \eqref{dual_vel_linear} satisfies the same assumptions as $v$, including the OSL condition,
the solution to problem \eqref{dual_linear} is well defined for any $s\in\R^+$. Therefore, also in this weaker
setting, formula \eqref{integral_l} gives the solution to \eqref{caputo_linear}.
\end{remark}
\section{Nonlinear transport equations with Caputo time derivative}\label{sec:nonlinear}
In this section  we consider the Cauchy problem for a nonlinear transport equation with Caputo time-derivative, i.e.
\begin{equation}\label{caputo_nonlinear}
\left\{\begin{array}{ll}
\fdz \m_t+\diver(v[\m_t]\,\m_t)=0\quad & (x,t)\in \R^d\times   \R^+, \\[8pt]
\m_{t=0} =\m_0  & x\in\R^d.
\end{array}
\right.
\end{equation}
Also in this case, we consider solutions in the measure theoretic sense. 
\begin{definition}\label{def_weak}
	A solution to \eqref{caputo_nonlinear} is a map $\m\in C(   \R^+,\cP)$ such that $\m_{t=0}=\m_0$ and for any $f\in C^\infty_c(\R^d)$ and for almost any $t\in  \R^+$
	\[
	\fdz\int_{\R^d} f(x) \m_t(dx)=\int_{\R^d}Df(x)\cdot v[\m_t]\,  \m_t(d x).
	\] 
\end{definition}
Along this section we assume that
\begin{itemize}
	\item[\textbf{(H2)}] $v$ is  bounded by  $V_0>0$ and Lipschitz continuous, i.e. there exists $L > 0$ such that  for any $x_1,x_2 \in \R^d$, $\mu_1, \m_2 \in \cP$,	 
		\[  |v[\mu_1](x_1) - v[\mu_2](x_2)|\leq L \left( |x_1 - x_2|+ \wass{\m_1}{\mu_2} \right). \]
	\item[\textbf{(H3)}]  $\m_0\in\cM^+(\R^d)$ and, for  given constants $C_{k}$, 
	$$ \int_{\R^d}|x|^k\m_0(dx)\le C_{k},\qquad\text{for $k=1,2$.}
	$$ 
	\end{itemize}
The previous assumptions are standard in the framework of the nonlinear transport   theory (see \cite{cpt}).
For fixed $\m\in C( \R^+,\cP)$, we consider the linear problem  
\begin{equation}\label{dual_nonlinear}
\left\{\begin{array}{ll}
\pd_t m+\diver(\vv^\m(x,t)\,m)=0\quad & (x,t)\in \R^d\times \R^+, \\[8pt]
m_{t=0}=\m_0  & x\in\R^d,
\end{array}
\right.
\end{equation}
where the linear velocity field $\vv^\m:\R^d\times\R^+\to\R^d$ is defined by
\begin{equation}\label{dual_vel_nonlinear}
\vv^\m(x,t):=\E\big[v[\m_{D_t}](x)\big]=\int_0^\infty v[\m_s](x)g_\b(s,t)ds.
\end{equation}
Observe that  $\vv^\m$ is   bounded by $V_0$. Moreover, we have
\begin{align*}
|\vv^\m(x_1,t) - \vv^\m(x_2,t)|&\le \int_0^\infty \left|v[\m_s](x_1)-v[\m_s](x_2)\right|g_\b(s,t)ds\\
&\leq L|x_1 - x_2|\int_0^\infty  g_\b(s,t)ds=L|x_1-x_2|,
\end{align*}
and therefore $\vv^\m$ is Lipschitz continuous in $x$. 
Moreover, by \eqref{pdf} and  since the function $G_\b$ is infinite differentiable in $\R^+$,   $\vv^\m$ is continuous in $t$. 
Hence, for fixed $\m$,     the   flow 
 \begin{equation}\label{flow_nl}
 \Phi^\m_t(x,0):=x+\int_0^t \vv^\m(\Phi^\m_s(x,0),s)\,ds, 
 \end{equation}
 is well defined and the measure solution to the linear problem \eqref{dual_nonlinear} is given 
 by $m^\m_s=\Phi^\m_s\#\m_0$.\\
In the next theorem, we prove  the  well-posedness of problem \eqref{caputo_nonlinear}, showing existence and uniqueness of the solution  in  weak measure  sense,   and  we provide an integral formula which generalizes the classical push-forward  representation formula to this setting. 
\begin{theorem}\label{existence_nl}
Assume \textbf{(H2)}-\textbf{(H3)}. Then,
 the Cauchy	problem  \eqref{caputo_nonlinear}  admits \textbf{a unique}  solution $\m\in C( \R^+,\cP)$,  given by the formula
$\m_t(dx)=\E[m^\m_{E_t}]$  or, equivalently, by
\begin{equation}\label{integral_nl}
\m_t(dx)= \int_0^\infty m^\m_s(dx)h_\b(s,t)ds=\int_0^\infty \Phi^\m_s\#\m_0(dx) \, h_\b(s,t)ds
\end{equation}
where  $m^\m\in C(\R^+,\cP)$	is the  solution of the linear transport problem \eqref{dual_nonlinear}.
\end{theorem}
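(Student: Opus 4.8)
The plan is to turn \eqref{caputo_nonlinear} into a fixed-point problem for the solution operator
\[
\cA:\m\longmapsto \cA[\m],\qquad \cA[\m]_t(dx):=\int_0^\infty \Phi^\m_s\#\m_0(dx)\,h_\b(s,t)\,ds ,
\]
where, for a frozen $\m\in C(\R^+,\cP)$, $\Phi^\m$ is the flow \eqref{flow_nl} of the field $\vv^\m$ of \eqref{dual_vel_nonlinear} and $m^\m_s=\Phi^\m_s\#\m_0$ solves the linear problem \eqref{dual_nonlinear}. The first observation is that $\cA$ is \emph{exactly} the solution operator. Indeed, the frozen field $v(x,t):=v[\m_t](x)$ satisfies \textbf{(H1)} thanks to \textbf{(H2)} (boundedness by $V_0$, $L$-Lipschitz in $x$ with $\m_1=\m_2$, continuity hence measurability in $t$ since $\m\in C(\R^+,\cP)$), and its associated averaged field is precisely $\vv^\m$; so the computation carried out in the proof of Proposition \ref{existence_linear} shows verbatim that $\cA[\m]$ is the weak solution of the \emph{linear} Caputo problem with velocity $v[\m_\cdot]$. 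Consequently $\m$ solves \eqref{caputo_nonlinear} in the sense of Definition \ref{def_weak} \textbf{if and only if} $\m=\cA[\m]$, and then \eqref{integral_nl} holds by construction. The whole statement thus reduces to proving that $\cA$ has a unique fixed point.

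Next I would fix the functional-analytic frame and check $\cA$ is a self-map of a complete space. For fixed $\m$ the field $\vv^\m$ is bounded by $V_0$, $L$-Lipschitz in $x$ and continuous in $t$ (all established just before \eqref{flow_nl}), so $\Phi^\m_s$ and $m^\m_s$ exist for every $s\ge 0$ and $\cA[\m]$ is well defined, with $\cA[\m]_t(\R^d)=\m_0(\R^d)$. Using $|\Phi^\m_s(x,0)|\le |x|+V_0 s$ together with the moment formula \eqref{moment} for $E_t$, one gets $\int|x|^k\,\cA[\m]_t(dx)\le\int (|x|+V_0 s)^k\,\m_0(dx)\,h_\b(s,t)\,ds<\infty$ for $k=1,2$, with a bound locally uniform in $t$, so the constraints of \textbf{(H3)} propagate; and continuity $t\mapsto\cA[\m]_t$ in $d_{BL}$ follows exactly as in \eqref{m2_l}--\eqref{m3_I2}, using the speed bound $|\vv^\m|\le V_0$ and the $\b$-moment of $E_t$. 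Hence $\cA$ maps into itself the closed set $\cS\subset C(\R^+,\cP)$ of measures with prescribed mass and moment growth, complete for the uniform $d_{BL}$-distance on bounded time windows.

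The core estimate is the contraction bound. Given $\m^1,\m^2$, from \eqref{dual_vel_nonlinear} and \textbf{(H2)},
\[
\|\vv^{\m^1}(\cdot,\s)-\vv^{\m^2}(\cdot,\s)\|_\infty\ \le\ L\int_0^\infty \wass{\m^1_r}{\m^2_r}\,g_\b(r,\s)\,d r ,
\]
and writing $\Phi^{\m^i}_s(x,0)=x+\int_0^s\vv^{\m^i}(\Phi^{\m^i}_\s(x,0),\s)\,d\s$ and using the $L$-Lipschitz bound of $\vv^{\m^1}$ in $x$, Gronwall's inequality gives, uniformly in $x$,
\[
|\Phi^{\m^1}_s(x,0)-\Phi^{\m^2}_s(x,0)|\ \le\ e^{Ls}\int_0^s \|\vv^{\m^1}(\cdot,\s)-\vv^{\m^2}(\cdot,\s)\|_\infty\,d\s .
\]
Since $\wass{m^{\m^1}_s}{m^{\m^2}_s}\le\m_0(\R^d)\sup_x|\Phi^{\m^1}_s(x,0)-\Phi^{\m^2}_s(x,0)|$, averaging against $h_\b(\cdot,t)$ as in \eqref{integral_nl} and invoking \eqref{exp} together with Lemma \ref{lemm1} (which makes $\Psi(t)=\E[LE_te^{LE_t}]$ finite and continuous with $\Psi(0)=0$) yields a bound of the form
\[
\sup_{t\in[0,T]}\wass{\cA[\m^1]_t}{\cA[\m^2]_t}\ \le\ \m_0(\R^d)\,\Psi(T)\ \sup_{t\ge 0}\wass{\m^1_t}{\m^2_t} .
\]
Because $\Psi(0^+)=0$, for $T$ small enough $\m_0(\R^d)\Psi(T)<1$, so $\cA$ is a contraction on $\cS\cap C([0,T],\cP)$ and has there a unique fixed point, which by the equivalence above is the unique solution on $[0,T]$.

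The step I expect to be the main obstacle is upgrading this to a solution that is defined, and unique, on all of $\R^+$. The Caputo/subordinator structure couples $\cA[\m]_t$ to the \emph{entire} trajectory $\{\m_r\}_{r\ge0}$ through $\vv^\m(x,\s)=\E[v[\m_{D_\s}](x)]$ (note the subordinator $D_\s$ is unbounded), so the classical "restart the Cauchy problem at $t=T$" continuation is not available and a naive weighted-norm trick fails because $\E[e^{\lambda D_\s}]=+\infty$. The resolution must exploit the exponential decay \eqref{decay} of $h_\b$ and the Mittag-Leffler identity \eqref{exp}: one iterates the contraction (or propagates uniqueness) controlling the constants through $\E[e^{LE_t}]=\cE_\b(Lt^\b)$ and $\Psi$, so that the estimate closes on every finite horizon. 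Everything else — continuity, moment propagation, and the verification that a fixed point is a weak solution — is routine and parallels Section \ref{sec:linear}.
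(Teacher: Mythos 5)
Your reduction to the fixed-point problem $\m=\cA[\m]$ and the verification that the integral formula defines a weak solution (repeating the computation of Proposition \ref{existence_linear} with the frozen field $v[\m_t]$) are fine and agree with what the paper does. The gap is in the core step: the contraction bound you derive is not a contraction. Your estimate reads $\sup_{t\in[0,T]}\wass{\cA[\m^1]_t}{\cA[\m^2]_t}\le \m_0(\R^d)\,\Psi(T)\,\sup_{t\ge 0}\wass{\m^1_t}{\m^2_t}$, with the supremum taken over $[0,T]$ on the left but over all of $\R^+$ on the right. This mismatch is structural, not cosmetic: $\vv^{\m}(\cdot,\s)=\E[v[\m_{D_\s}](\cdot)]$ samples the curve at the unbounded times $D_\s$ (a $\b$-stable subordinator has $\E[D_\s]=\infty$, let alone exponential moments), and the $s$-integral defining $\cA[\m]_t$ runs over all of $(0,\infty)$, so $\cA[\m]_t$ for arbitrarily small $t$ depends on the whole trajectory $\{\m_r\}_{r\ge 0}$. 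Consequently there is no complete space of curves on $[0,T]$ (nor, as you yourself note, any weighted space) on which $\cA$ acts and is a strict contraction; taking $T$ small does not help, and a restart-at-$T$ continuation is unavailable. The sentence ``one iterates the contraction \dots so that the estimate closes on every finite horizon'' is exactly the missing argument, not a routine supplement: as written, neither existence nor uniqueness is established, even locally in time.

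For comparison, the paper avoids contraction entirely. Existence is obtained by Schauder's fixed point theorem on a convex, compact set $\cC$ of curves with uniform $\b$-H\"older and moment bounds, applied to truncated operators $\Psi_n$ in which the $s$-integral is cut at $T_n$ (so the offending dependence on the infinite horizon is removed before the fixed-point step), first for compactly supported $\m_0$ and a velocity vanishing outside a ball (Lemma \ref{existence_compact}), then letting $T_n\to\infty$ by a diagonal/compactness argument and finally removing the cut-off $R\to\infty$ in the proof of Theorem \ref{existence_nl}. Uniqueness is proved by the substitution $t=D_r$: evaluating the stability inequality \eqref{es110} at $t=D_r$, taking expectations and using $E_{D_r}=r$, the quantity $\L(r)=\E[\wass{\m^1_{D_r}}{\m^2_{D_r}}]$ satisfies a genuine Gronwall inequality on $[0,r]$, hence $\L\equiv 0$ when $\m^1_0=\m^2_0$, and the positivity of $g_\b$ then gives $\m^1_s=\m^2_s$ for all $s$. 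This change of time through the subordinator is precisely the device that closes the whole-half-line coupling on which your contraction founders; either it, or the Schauder/truncation route, would have to be supplied to turn your proposal into a proof.
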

In the next lemma, we prove existence of a solution to \eqref{caputo_nonlinear}  when the initial datum $\m_0$ has compact support and the velocity is null outside a given ball.
\begin{lemma}\label{existence_compact}
	Besides \textbf{(H2)}-\textbf{(H3)}, assume that $\supp\{\m_0\}\subset B(0,R)$ and $v[\m](x)\equiv 0$ for all $\m\in\cP$, $x\in\R^d\setminus B(0,2R) $, for some positive constant
	$R$. Then the Cauchy	problem  \eqref{caputo_nonlinear}  admits a  solution $\m\in C( \R^+,\cP)$ implicitly defined  by the  integral formula \eqref{integral_nl}.
\end{lemma}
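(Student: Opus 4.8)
The plan is to recast the integral formula \eqref{integral_nl} as a fixed point equation $\m=\Psi(\m)$ and solve it by the Schauder--Tychonoff theorem (a continuous self‑map of a compact convex subset of a locally convex space has a fixed point). I deliberately avoid a contraction/weighted‑norm argument: $\Psi(\m)_t$ depends on the whole curve $\m$ through $\vv^\m$, and the heavy tails of the subordinator $D_s$ (no exponential moments) clash with the exponential Gronwall growth of the flow, so no weight works. Given $\m\in C(\R^+,\cP)$, the velocity $\vv^\m$ of \eqref{dual_vel_nonlinear} is bounded by $V_0$, Lipschitz in $x$ with constant $L$ and continuous in $t$, hence the flow \eqref{flow_nl} is global and I set $\Psi(\m)_t:=\int_0^\infty\Phi^\m_s\#\m_0\,h_\b(s,t)\,ds$. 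A fixed point solves \eqref{caputo_nonlinear}: if $\m=\Psi(\m)$, put $u(x,s):=v[\m_s](x)$, a velocity satisfying \textbf{(H1)} by \textbf{(H2)}; then $\vv^\m(x,s)=\int_0^\infty u(x,r)g_\b(r,s)\,dr$, so by Proposition~\ref{existence_linear} applied to $u$ the curve $\m=\Psi(\m)$ is the solution of the linear Caputo problem with velocity $u$, i.e. $\fdz\dual{\m_t}{f}=\dual{\m_t}{Df\cdot v[\m_t]}$ for a.e.\ $t$ and all $f\in C^\infty_c(\R^d)$, which is Definition~\ref{def_weak}.

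\textbf{The invariant compact set.} I then restrict $\Psi$ to the set $X$ of $\m\in C(\R^+,\cP)$ with $\m_{t=0}=\m_0$, $\supp\m_t\subseteq\overline{B(0,2R)}$, $\m_t(\R^d)=\m_0(\R^d)$ for all $t$, and $\wass{\m_s}{\m_t}\le C_*|t-s|^\b$ for $s\le t$, with $C_*:=C(\b,1)V_0\m_0(\R^d)$ the curve‑independent constant from the continuity estimate in Proposition~\ref{existence_linear}. Since $v[\m](x)\equiv0$ off $B(0,2R)$ forces $\vv^\m(\cdot,s)\equiv0$ there, the flow $\Phi^\m_s$ fixes every point outside $\overline{B(0,2R)}$ and maps $\overline{B(0,2R)}$ into itself; hence $\Phi^\m_s\#\m_0$, and therefore $\Psi(\m)_t$, is supported in $\overline{B(0,2R)}$, has mass $\m_0(\R^d)$, equals $\m_0$ at $t=0$, and satisfies the same $\b$‑Hölder bound (the constant in Proposition~\ref{existence_linear} depends only on $\b,V_0,\m_0(\R^d)$). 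Thus $\Psi(X)\subseteq X$. Moreover $X$ is convex (the constraints pass to convex combinations, using subadditivity of $\normBL{\cdot}$) and compact in $C(\R^+,BL(\R^d)^*)$ with the topology of uniform convergence on compact time intervals: positive measures supported in the fixed ball with fixed mass form a $d_{BL}$‑compact set, and the uniform $\b$‑Hölder bound gives equicontinuity, so Arzel\`a--Ascoli together with a diagonal argument over $T\to\infty$ applies.

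\textbf{Continuity of $\Psi$ and conclusion.} Finally I verify that $\Psi:X\to X$ is continuous for this topology. If $\m^n\to\m$ in $X$, then for each $s$, $\|\vv^{\m^n}(\cdot,s)-\vv^\m(\cdot,s)\|_\infty\le L\int_0^\infty\wass{\m^n_r}{\m_r}\,g_\b(r,s)\,dr\to0$ by dominated convergence: the integrand is uniformly bounded (all measures sit in the fixed ball with mass $\m_0(\R^d)$) and tends to $0$ pointwise in $r$, while $g_\b(\cdot,s)$ is a probability density. Gronwall's inequality applied to \eqref{flow_nl} then gives $\sup_x|\Phi^{\m^n}_s(x,0)-\Phi^\m_s(x,0)|\le e^{Ls}\int_0^s\|\vv^{\m^n}(\cdot,\sigma)-\vv^\m(\cdot,\sigma)\|_\infty\,d\sigma\to0$, hence $\wass{\Phi^{\m^n}_s\#\m_0}{\Phi^\m_s\#\m_0}\to0$ for each $s$; one more dominated convergence in $\int_0^\infty(\cdot)\,h_\b(s,t)\,ds$ yields $\wass{\Psi(\m^n)_t}{\Psi(\m)_t}\to0$ for each $t$, and the common $\b$‑Hölder bound upgrades this to uniform convergence on each $[0,T]$. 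Schauder--Tychonoff then provides $\m^*\in X$ with $\m^*=\Psi(\m^*)$, which by the first paragraph is a solution of \eqref{caputo_nonlinear} given by \eqref{integral_nl}. The main obstacle is exactly this global‑in‑time coupling in \eqref{integral_nl}, which blocks a contraction proof; it is overcome by observing that the nonlocal dependence is averaged against the probability kernels $g_\b(\cdot,s)$ and $h_\b(\cdot,t)$, so dominated convergence plus compactness (rather than a Lipschitz estimate) do the job.
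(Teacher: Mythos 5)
Your proof is correct, and it lives in the same overall framework as the paper's (a Schauder fixed point for the map $\m\mapsto\int_0^\infty \Phi^\m_s\#\m_0\,h_\b(s,t)ds$ on a convex, compact set of uniformly $\b$-H\"older curves in $\cP$, with the cut-off velocity confining the flow to $\overline{B(0,2R)}$), but the execution is genuinely different in two respects. First, the paper does not apply Schauder to $\Psi$ directly: it introduces truncated maps $\Psi_n$, with the time integral cut at $T_n$, proves continuity of each $\Psi_n$ through the quantitative bound $\E[LE_te^{LE_t}\chi_{\{E_t<T_n\}}]\le T_nLe^{LT_n}$, obtains fixed points $\m^n=\Psi_n(\m^n)$, and then passes to the limit $T_n\to\infty$, a step that requires the finiteness of $\E[LE_te^{LE_t}]$ established in Lemma \ref{lemm1}. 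You instead prove sequential continuity of the untruncated $\Psi$ by dominated convergence, exploiting the crude but uniform bound $\wass{\Phi^{\m^n}_s\#\m_0}{\Phi^\m_s\#\m_0}\le 2\m_0(\R^d)$ to dominate against the probability densities $g_\b(\cdot,s)$ and $h_\b(\cdot,t)$; this removes both the truncation and any appeal to Lemma \ref{lemm1} (the exponential Gronwall factor $e^{Ls}$ never needs to be integrated against $h_\b$), at the price of losing the quantitative modulus of continuity of the fixed-point map, which is harmless here. Second, your compactness comes from support confinement (measures of fixed mass supported in a fixed ball are $d_{BL}$-compact), whereas the paper phrases it through uniform first and second moment bounds; since the moment bounds in the paper are themselves derived from the same confinement $\Phi^\n_r(x,0)\in B(0,2R)$, this is only a repackaging. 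Your reduction of the fixed point to a weak solution via Proposition \ref{existence_linear} applied to $u(x,t)=v[\m_t](x)$ is exactly the paper's closing remark, so nothing is missing there. In short: same skeleton, but your one-step argument is more elementary, trading the paper's truncation-plus-limit scheme and its reliance on Lemma \ref{lemm1} for a soft dominated-convergence continuity proof.
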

\begin{proof}
 Let   $\{T_n\}_{n \in\N}$ be an increasing sequence such that $\lim_{n\to \infty} T_n=+\infty$ and, for any $n\in\N$, define the map $\Psi_n: C( \R^+ ,\cP) \to C( \R^+,\cP)$ which  associates to   $\nu \in  C( \R^+,\cP)$  a measure-valued curve  $\mu \in  C(\R^+,\cP)$, defined for every $t \in \R^+$ as 
		$$\mu_t(dx)=\int_0^{T_n} m^{\n}_r(dx)h_\b(r,t)dr,$$
where $m^{\n}$  is the solution of the linear transport equation
		\[ 
		\left\{\begin{array}{ll}
		\pd_r  m_r+\diver(\vv^\n(x,r)\, m_r)=0\quad & (x,r)\in \R^d\times \R^+, \\[8pt]
		m_{r=0} =\m_0                           & x\in\R^d,
		\end{array}
		\right. \]
	and 	$\vv^\n$ is defined as in \eqref{dual_vel_nonlinear} with $\n$ in place of $\m$
		and $\Phi^\n$ is the associated flow.\\
Let us check that, for any $n\in\N$, $\Psi_n$	 is well-defined and continuous from the set
\begin{align*}
		\cC :=\left\lbrace  \m\in C(\R^+,\cP):\,\sup_{t\neq t'}\frac{\wass{\m_t}{\m_{t'}}}{|t-t'|^\b}\le R_0,\, 
	 \sup_{t\in  \R^+ }\int_{\R^d}|x|^k\m_t(dx)\le R_0,\,k=1,2\right\rbrace
\end{align*} 
	into itself, for an appropriate constant $R_0$ to be chosen later.  Note that the set $\cC$ is non-empty, since  it contains any  constant in time measure-valued map. Moreover, it is convex   since a  convex combination of maps in $\cC$   preserves the H\"older condition and boundedness of the moments.
	Lastly, the compactness is a consequence of Ascoli-Arzela's Theorem in space of measures (see \cite[Lemma 7.1.5]{ags} and \cite[pag.148]{cpt}). Indeed, given a  sequence $\{\m^n\}_{n\in \N}\subset \cC$, then for a given $T>0$, by Ascoli-Arzela's Theorem we can find a converging subsequence  in $[0,T]$. Iterating the same argument in $[kT,(k+1)T]$, $k\in\N$, we can find a subsequence $\m^{n_k}$ converging to $\m\in\cC$. 	\\
	Given $\nu\in \cC$, we set $\mu=\Psi_n(\nu)$. Since $m^{\n}_r= \Phi^{\nu}_r\#\m_0(dx)$, we have	
\begin{equation*}
	\m_t(\R^d)=\int_{\R^d}\m_t(dx)=\int_0^{T_n} \int_{\R^d}   m^{\n}_{r}(dx) h_\b(r,t)dr\le \int_0^\infty \int_{\R^d}  \m_0(dx) \, h_\b(r,t)dr=\m_0(\R^d).
\end{equation*}
To estimate the first and second order moments of $\m_t$, observe that
since $v[\n](x)\equiv 0$ in $\R^d\setminus B(0,2R)$, then 
$\vv^\n(x,s)\equiv 0$ for all $(x,s)\in(\R^d\setminus B(0,2R))\times \R^+$. Therefore, if $x\in B(0,R)$, it follows that $\Phi^{\nu}_r(x,0)\in B(0,2R)$ for all  $r\in \R^+$. Hence, for $k=1,2$, we get
\begin{equation}\label{moment_R}
\begin{split}
	&\int_{\R^d}|x|^k\m_t(dx)=\int_0^{T_n} \int_{\R^d}|x|^k  m^{\n}_{r}(dx) h_\b(r,t)dr\\
	&\le\int_0^\infty \int_{\R^d}   |\Phi^{\nu}_r(x,0)|^k \m_0(dx) \, h_\b(r,t)dr	\le (2R)^k\m_0(\R^d).
	\end{split}
\end{equation}

We estimate $\wass{\m_t}{\m_{t'}}$ for $0\le t'<t$. Arguing as in the estimates \eqref{m2_l}-\eqref{m3_I2}, for $f\in \Lip$ such that $\|f\|_{BL}\le 1$ we have
\begin{equation}\label{m3}
\begin{split}
&\dual{\m_t-\m_{t'}}{f}=\int_0^{T_n}  \left( \dual{m^{\nu}_{r}}{f}h_\b(r,t)-\dual{m^{\nu}_{r}}{f}h_\b(r,t')\right)dr \\
&=\int_0^{T_n}  \dual{m^{\nu}_{r}}{f}\left(h_\b(r,t)-h_\b(r,t')\right)dr
\le V_0\m_0(\R^d)C(\b,1) |t-t'|^\b.
\end{split}
\end{equation}
 By \eqref{moment_R} and \eqref{m3} (which are uniform in $n$), it follows that the map $\Psi_n$ is well defined. Moreover if $R_0$
 in the definition of $\cC$ is greater than the constants appearing in the estimates \eqref{moment_R}  and  \eqref{m3}, then $\Psi_n$ maps $\cC$ into itself.\\
  We now prove that $\Psi_n$ is continuous. We first estimate the distance between the flows corresponding to different measures. 
  Given $\m,\n\in C(\R^+,\cP)$, by  \textbf{(H2)}   we have
  \begin{align*}
  &|\vv^\m(x,s)-\vv^\n(x,s)|=\left|\int_0^\infty ( v[\m_r] -v[\n_r])g_\b(r,s)dr\right|\\ 
  &=\left|\E[v[\m_{D_s}](x)-v[\n_{D_s}](x)]\right|\le L \E[\wass{\m_{D_s}}{\n_{D_s}}].
  \end{align*}
  Hence, 
  \begin{align*}
  &\left|\Phi^\m_s(x,0)- \Phi^\n_s(x,0)\right|	=
  \int_0^s|\vv^\m(\Phi^\m_r(x,0),r)-\vv^\n(\Phi^\n_r(x,0),r)|dr\\
  &\le\int_0^s|\vv^\m(\Phi^\m_r(x,0),r)-\vv^\m(\Phi^\n_r(x,0),r)|dr+\int_0^s|\vv^\m(\Phi^\n_r(x,0),r)-\vv^\n(\Phi^\n_r(x,0),r)|dr
  \\
  &\le L\int_0^s |\Phi^\m_r(x,0)  - \Phi^\n_r(x,0) |dr
  +L \int_0^s  \E[\wass{\m_{D_r}}{\n_{D_r}}] dr.
  \end{align*}
  Therefore, by Gronwall's inequality, we have the estimate
  \begin{equation}\label{cont_1}
  \left|\Phi^\m_s(x,0)- \Phi^\n_s(x,0)\right| \le L e^{Ls}\int_0^s  \E[\wass{\m_{D_r}}{\n_{D_r}}] dr
  \le L se^{Ls}\E[\sup_{r\in[0,D_s]}  \wass{\m_{r}}{\n_{r}}].
  \end{equation}
Let $\{\nu^k\}_{k \in \N} \subset \mathcal{C}$ be a sequence converging to $\nu \in \mathcal{C}$ and set
  $\m^k=\Psi(\nu^k)$, $\m=\Psi(\nu)$. Denoted with $\Phi^{\nu^k}_r, \Phi^\nu_r$ the corresponding flows, we have for $f \in BL(\R^d)$
 \begin{align*}
     \langle \mu^k_t - \mu_t, f \rangle &= \int_0^{T_n}\langle m^{\nu^k}_s - m^{\nu}_s, f \rangle h_\beta(s, t)ds\\
     &= \int_0^{T_n} \int_{\R^d} \left( f(\Phi_s^{\nu^k}(x,0)) - f(\Phi_s^{\nu}(x,0)) \right)\mu_0(dx)h_\beta(s, t)ds\\
     &\leq \mu_0(\R^d) \int_0^{+\infty} \E\left[\sup_{r \in [0,D_s]} d_{BL}( \nu^k_r, \nu_r)\right]Lse^{Ls}\chi_{\{s < T_n\}}h_\beta(s,t)ds\\
     &\leq \mu_0(\R^d) \sup_{r\in[0,t]}\left(d_{BL}(\nu^k_r, \nu_r)\right)  \mathbb{E}[LE_t e^{LE_t} \chi_{\{E_t < T_n\}}] \\
     &\le \mu_0(\R^d)T_nL e^{LT_n} \sup_{r \in [0,t]}d_{BL}(\nu^k_r, \nu_r).
 \end{align*}
  Then, the convergence of $\mu^k$ to $\mu$, i.e. the continuity of the map $\Psi_n$.
  \\
  Applying  the Schauder's Fixed Point Theorem to $\Psi_n$, we have that, for every $n \in \N$, there exists   $\mu^n \in \mathcal{C}$ such that $\mu^n = \Psi_n(\mu^n)$, i.e.
 \begin{equation}\label{fix_n}
 \mu_t^n(dx) = \int_0^{T_n} m^{\mu_n}_s(dx)h_\beta(s,t)ds, \quad \forall t \in \R^+.
 \end{equation}
 Due to the the uniform in time estimates on first and second momentum and the uniform H\"olderianity, by a diagonal argument, we have the existence of a subsequence $\{\mu^{n_k}\}_{k \in \N}$ and $\mu \in \mathcal{C}$ such that $\mu^{n_k}_t \to \mu_t$   for $t \in \R^+.$ 
 We     prove that for any $t\in\R^+$, $\mu$   satisfies the integral formula   \eqref{integral_nl}.
 Denote with $\{\mu^n\}_{n \in \N}$ the (sub-)sequence converging to $\mu$, for any $f \in BL(\R^d)$ with $\|f\|_{BL}\leq 1$ we have
 \begin{align*}
      &\int_0^{+\infty}\langle m^\mu_s, f \rangle h_\beta(s,t)ds - \int_0^{T_n} \langle m^{\mu^n}_s, f \rangle h_\beta(s,t)ds =\\
      &= \int_{T_n}^{+\infty}\langle m^\mu_s, f \rangle h_\beta(s,t)ds + \int_0^{T_n} \langle m^\mu_s - m^{\mu^n}_s, f \rangle h_\beta(s,t)ds.
 \end{align*}
  The first term on the right hand side converges to 0 for $n \to +\infty$, since
 \begin{align}
     \int_{T_n}^{+\infty}\langle m^\mu_s, f \rangle h_\beta(s,t)ds \leq \mu_0(\R^d) \int_{T_n}^{+\infty} h_\beta(s, t)ds \to 0.
   \end{align}
 For the second term,  by \eqref{cont_1} we have 
 \begin{align*}
      \int_0^{T_n} \langle m^\mu_s - m^{\mu^n}_s, f \rangle h_\beta(s,t)ds&\leq \int_0^{T_n} \int_{\R^d}\left( f(\Phi^\mu_s(x, 0)) - f(\Phi^{\mu^n}_s(x,0)) \right)\mu_0(dx)h_\beta(s,t) ds\\
      &\leq\mu_0(\R^d)\int_0^{T_n}\E\left[\sup_{r \in [0, D_s]}d_{BL}(\mu^n_r, \mu_r)\right]Lse^{Ls}h_\beta(s, t)ds\\
      &\leq \mu_0(\R^d)\E[LE_t e^{LE_t}]\sup_{[0,t]}d_{BL}(\mu^n_s, \mu_s).
 \end{align*}
  Note that  by Lemma \ref{lemm1},  $\E[LE_te^{LE_t}]
  <\infty$  for any  $t\geq 0$.
 Hence, for $n \to +\infty$ and $t \in \R^+$, we have 
 $$ \int_0^{T_n} m^{\mu^n}_s (dx) h_\beta(s, t) ds \to \int_0^{+\infty} m^{\mu}_s (dx) h_\beta(s, t) ds$$
 and passing to the limit for $n\to \infty$ in \eqref{fix_n}, we get  \eqref{integral_nl}. \\
 As in Proposition \ref{existence_linear},
 it is possible to show that formula \eqref{integral_nl} defines a weak solution of the problem. Hence the existence of a solution 
 to \eqref{caputo_nonlinear} follows.
\end{proof} 
\begin{proof}[Proof of Theorem \ref{existence_nl}]
	Given $R>0$, we consider a sequence of initial data given by $\m_0^R=\chi_{B(0,R)}(x)\cdot\m_0$, where $\chi_{B(0,R)}$ is the characteristic function of the set $B(0,R)$, and a sequence of velocity fields $v^R[\m](x)=v[\m](x)\cdot\s_R(x)$
	where $\s_R:\R^d\to\R$ is a smooth, non negative function such that $\s_R(x)=1$ for $x\in B(0,2R-1)$, $\s_R(x)=0$ for $x\in\R^d\setminus B(0,  2R)$ and $|D\s_R|\le 1$. By Lemma  \ref{existence_compact}, for any $R>1$, there exists   a solution $\m^R$ to the Cauchy problem \eqref{caputo_nonlinear} given by the formula
	\begin{equation}\label{integral_R}
	\m^R_t(dx)= \int_0^\infty m^R_s(dx)h_\b(s,t)ds=\int_0^\infty \Phi^R_s\#\m^R_0(dx) \, h_\b(s,t)ds
	\end{equation}
where $m^R$ is the solution of \eqref{caputo_nonlinear} corresponding to the  velocity $\vv^R(x,s)=\int_0^\infty v^R[\m^R_r](x)g_\b(r,s)dr$ and $\Phi^R$ the associated flow.\\
 For any $T>0$, we consider the restriction of the sequence $\{\m^R\}$ to the interval $[0,T]$. We estimate the first and second order moment of $\m^R_t$ for $t\in [0,T]$, uniformly with respect to   $R$. By \eqref{flow_nl}, we have
\begin{align*}
&\int_{\R^d}|x|\m^R_t(dx)=\int_0^\infty \int_{\R^d}   |\Phi^R_s(x,0)| \m^R_0(dx) \,  h_\b(s,t)ds \\
&\le \int_0^\infty  \int_{\R^d}|x| \m^R_0(dx)     h_\b(s,t)ds+ \mu_0(\R^d)V_0\, \int_0^\infty s   h_\b(s,t)ds\le \int_{\R^d}|x| \m_0(dx)  +  \mu_0(\R^d)V_0C(\b,1)t^\b,
\end{align*}
and 
\begin{align*}
&\int_{\R^d}|x|^2\m^R_t(dx)=\int_0^\infty  \int_{\R^d}  |\Phi^R_s(x,0)|^2 \m^R_0(dx) \, h_\b(s,t)ds\le \int_0^\infty  \left( \int_{\R^d}|x|^2 \m^R_0(dx)\right)     h_\b(s,t)ds\\
&+  V_0^2 \int_0^\infty \left(  \int_{\R^d}\m^R_0(dx)\right)  s^2  h_\b(s,t)ds +2 V_0   \int_0^\infty  \left( \int_{\R^d}  |x|   \m^R_0(dx)\right)  \,s h_\b(s,t)ds \\
&\le \int_{\R^d}|x|^2 \m_0(dx)+ V_0^2\m_0(\R^d) C(\b,2)t^{2\b}  +2V_0C(\b,1)t^\b \int_{\R^d}|x| \m_0(dx) .
\end{align*}
The previous estimates imply that the sequence  $\mu^R_t$ is relatively compact in $\cP$ for any $t\in [0,T]$.
Moreover  estimate    \eqref{m3}, which is independent of $R$, implies that the sequence $\mr$  is also equi-continuous with respect to $t$. By Ascoli-Arzela's Theorem (see \cite[Section 3.3 and Lemma 7.1.5]{ags}), we conclude that  there exists a measure $\m\in C([0,T],\cP)$ such that $\sup_{[0,T]}\wass{\m^R_t}{\m_t}$
tends to 0 for $R\to \infty$, up to a subsequence. By a diagonal argument, we can extend   $\m\in C(\R^+,\cP)$.\par
To prove that $\m$ is a solution of \eqref{caputo_nonlinear}, we show that it satisfies the integral formula \eqref{integral_nl}. 
Let $m^\m\in C(\R^+,\cP)$ be the solution of the Cauchy problem \eqref{dual_nonlinear} associated to the measure $\m$,
i.e. with velocity field $\vv^\m(x,s)=\int_0^\infty v[\m_r](x)g_\b(r,s)dr$, and initial datum $\m_0$. 
Then,  for $f\in\Lip$ such that$\|f\|_{BL}\le 1$, we have
\begin{equation}\label{convergence}
\begin{split}
\dual{m^R_s-m^\m_s}{f}=\int_{\R^d}f(\Phi^R_s(x,0))\m^R_0(dx)-\int_{\R^d}f(\Phi^\m_s(x,0))\m_0(dx)=\\
\int_{\R^d}\left(f(\Phi^R_s(x,0))-f(\Phi^\m_s(x,0))\right)\m^R_0 (dx)+   \int_{\R^d}f(\Phi^\m_s(x,0))(\m^R_0(dx)- \m_0(dx))\\
 \le \int_{\R^d}\left|\Phi^\m_s(x,0)- \Phi^R_s(x,0)\right|\m^R_0 (dx)+
 \m_0(\R^d\setminus B(0,R)).
\end{split}
\end{equation}
 To estimate the first term on the right hand side, we preliminary  observe
 by \textbf{(H2)}    
\begin{align*}
&|\vv^\m(x,s)-\vv^R(x,s)|=\left|\int_0^\infty ( v[\m_r](x) -v[\m^R_r](x)\s^R(x))g_\b(r,s)dr\right|\\ 
&\le\int_0^\infty | v[\m_r](x) -v[\m^R_r](x)|g_\b(r,s)dr+\int_0^\infty | v[\m^R_r](x) (1-\s^R(x))|g_\b(r,s)dr\\
&\le L \E[\wass{\m_{D_s}}{\m^R_{D_s}}]+V_0\chi_{\R^d\setminus B(0,2R-1)}(x).
\end{align*}
Hence 
\begin{align*}
&\left|\Phi^\m_s(x,0)- \Phi^R_s(x,0)\right|	=
\int_0^s|\vv^\m(\Phi^\m_r(x,0),r)-\vv^R(\Phi^R_r(x,0),r)|dr\\
&\le \int_0^s|\vv^\m(\Phi^\m_r(x,0),r)-\vv^\m(\Phi^R_r(x,0),r)|dr
+\int_0^s|\vv^\m(\Phi^R_r(x,0),r)-\vv^R(\Phi^R_r(x,0),r)|dr\\
&\le L\int_0^s |\Phi^\m_r(x,0)  - \Phi^R_r(x,0) |dr
+\int_0^s\left( L \E[\wass{\m_{D_r}}{\m^R_{D_r}}]+V_0\chi_{\R^d\setminus B(0,2R-1)}(\Phi^R_r(x,0))\right)dr
\end{align*}
and therefore, by Gronwall's inequality, we get
\begin{equation}\label{conv_1}
\begin{split}
\left|\Phi^\m_s(x,0)- \Phi^R_s(x,0)\right|&\le  e^{Ls}\int_0^s\left(L \E[\wass{\m_{D_r}}{\m^R_{D_r}}]+V_0\chi_{\R^d\setminus B(0,2R-1)}(\Phi^R_r(x,0))\right)dr.
\end{split}
  \end{equation}
Replacing  \eqref{conv_1} in \eqref{convergence}, we get
\begin{align*} 
\dual{m^R_s-m^\m_s}{f}&\le e^{Ls}\int_0^s \left(L  \E[\wass{\m_{D_r}}{\m^R_{D_r}}]+V_0\int_{\R^d}\chi_{\R^d\setminus B(0,2R-1)}(\Phi^R_r(x,0))\m_0^R(dx)\right) dr\\
&  +\m_0(\R^d\setminus B(0,R)).
\end{align*}
and therefore, as in \eqref{cont_1}, we get
\begin{align} 
&\int_0^\infty\dual{m^R_s-m^\m_s}{f}h_\b(s,t)ds   \le \int_0^\infty
 \sup_{r\in[0,s]}  \E[\wass{\m_{D_r}}{\m^R_{D_r}}] Ls e^{Ls} h_\b(s,t)ds\nonumber\\
&+ \int_0^\infty e^{Ls}\int_0^s\int_{\R^d} V_0  \chi_{\R^d\setminus B(0,2R-1)}(\Phi^R_r(x,0))\m_0^R(dx) h_\b(s,t)dr ds+   \m_0(\R^d\setminus B(0,R)) \label{est_final}\\
& \le \sup_{r\in[0,t]} \wass{\m_{r}}{\m^R_{r}} \E[L E_te^{LE_t}]
+\E\left[e^{LE_t}\int_0^{E_t}\int_{\R^d}V_0  \chi_{\R^d\setminus B(0,2R-1)}(\Phi^R_r(x,0))\m_0^R(dx)dr\right]\nonumber\\
&+ \m_0(\R^d\setminus B(0,R)).\nonumber
\end{align}
Observe that if $x\in B(0,R)$, then
\[ \E[\sup_{[0,E_t]}|\Phi^R_r(x,0)| ]\le |x|+V_0C(\b,1)t^\b .\]
Since    inequality \eqref{est_final} holds for any $f$ and all the terms on the right hand side tend to $0$ for $R\to \infty$,  we get the convergence of $\int_0^\infty m^R_s(dx)h_\b(s,t)ds$ to
$\int_0^\infty m^\m_s(dx)h_\b(s,t)ds$ for $R\to \infty$.
Passing to the limit for $R\to \infty$ in \eqref{integral_R}, we get that  $\m$ satisfies \eqref{integral_nl} and therefore it is a weak solution to \eqref{caputo_nonlinear}.\par
We finally prove  the uniqueness of the solution to \eqref{caputo_nonlinear}. Let $\mu^1, \mu^2$ be two solutions to \eqref{caputo_nonlinear} with initial conditions $\mu^1_0, \mu^2_0 \in \cM^+(\R^d)$. 
 For $f \in BL(\R^d)$ with $\|f\|_{BL}\leq 1$, we have
\begin{align*}
    \langle \mu^1_t - \mu^2_t,f \rangle &= \int_0^{+\infty} \langle m^1_s - m^2_s, f\rangle h_\b(s,t)ds \\
    &= \int_0^{+\infty} \left(\langle \mu^1_0, f(\Phi^{\m^1}_s) - f(\Phi^{\m^2}_s)\rangle + \langle \mu^1_0 - \mu^2_0, f(\Phi^{\m^2}_s) \rangle\right)h_\b(s,t)ds\\
    &\leq \int_0^{+\infty} \left(\langle \mu^1_0 \left|  \Phi^{\m^1}_s - \Phi^{\m^2}_s\right|\rangle + \langle \mu^1_0 - \mu^2_0, f(\Phi^2_s) \rangle\right)h_\b(s,t)ds.
\end{align*}
By the first inequality in \eqref{cont_1}, i.e. $\left|\Phi^\m_s(x,0)- \Phi^\n_s(x,0)\right| \le L e^{Ls}\int_0^s  \E[\wass{\m_{D_r}}{\n_{D_r}}] dr$, and since $L^{-1}f(\Phi^{\m^2}_s(x,0))e^{-Ls}$ is $1$-Lipschitz, it follows that
\begin{align*}
\langle \mu^1_t - \mu^2_t,f \rangle &\leq \mu^1_0(\R^d) \int_0^{+\infty} L e^{Ls} \left(\int_0^s \L(r)dr\right)h_\b(s,t)ds + L d_{BL}(\mu^1_0, \mu^2_0)\int_0^{+\infty}e^{Ls}h_\b(s,t)ds\\
&= \mu^1_0(\R^d) \int_0^{+\infty} L e^{Ls} \left(\int_0^s \L(r)dr\right)h_\b(s,t)ds + L d_{BL}(\mu^1_0, \mu^2_0)\E[e^{LE_t}],
\end{align*}
where $\Lambda(r) = \mathbb{E}[d_{BL}(\mu^1_{D_r}, \mu^2_{D_r})]$. For the arbitrariness of $f$ and recalling that $h_\b(\cdot,t)$ is the PDF of the process $E_t$, we get
\begin{equation}\label{es110}
  d_{BL}(\mu^1_{t}, \mu^2_{t})  \le  \mu^1_0(\R^d)\E\left[  L e^{LE_t}  \int_0^{E_t} \L(r)dr \right] + L d_{BL}(\mu^1_0, \mu^2_0)\E[e^{LE_t}].
\end{equation}
Observe that, by the conservation of the initial mass, we have $\Lambda(r) \leq \mathbb{E}[\|\mu^1_{D_r}\|^*_{BL} + \|\mu^2_{D_r}\|^*_{BL}] = \|\mu^1_0\|^*_{BL} + \|\mu^2_0\|^*_{BL}$  and therefore $\Lambda(r)$ is finite for any $r \geq 0$. Moreover,  replacing $t=D_r$ in \eqref{es110} and taking the expectation, we get 
\begin{align*}
\L(r) &\leq \mu^1_0(\R^d)   \mathbb{E}\left[L e^{L E_{D_r}} \int_{0}^{E_{D_r}} \L (z) dz \right] + L d_{BL}(\mu^1_0, \mu^2_0)\mathbb{E}[e^{L E_{D_r}}]\\
&=  \mu^1_0(\R^d) L e^{L r} \int_{0}^{r} \L (z) dz  + d_{BL}(\mu^1_0, \mu^2_0)L e^{L r},
\end{align*}
since $E_t$ is the inverse   process of $D_t$. Then, by Gronwall's inequality, we have 
\begin{equation}\label{es111}
\L(r) \leq Le^{Lr} d_{BL}(\mu^1_0, \mu^2_0) e^{\mu^1_0(\R^d)(e^{Lr} - 1)}.
\end{equation}
Hence, if we put $\mu^1_0=\mu^2_0$ in the previous inequality, it follows that $\Lambda(r)=0$, i.e. 
\[
\int_0^{+\infty} d_{BL}(\mu^1_s,\mu^2_s)g_{\b}(s,r)ds=0.
\]
Since $g_{\b}(s,r) > 0$ for $(s,r) \in \R^+\times \R^+$ and $\m^i \in  C( \R^+,\cP)$, $i=1,2$, it follows that $d_{BL}(\mu^1_s,\mu^2_s) = 0$ for  $s \in \R^+$, i.e. the uniqueness in measure theoretic sense of the solution to \eqref{caputo_nonlinear}.
\end{proof}

\begin{remark}\label{source}
Consider a nonlinear transport equation with source term
\begin{equation}\label{transport_source}
\left\{\begin{array}{ll}
\fdz \m+\diver(v[\m_t]\,\m)=\g_t\quad & (x,t)\in \R^d\times (0,T), \\[8pt]
\m_{t=0}=\m_0  & x\in\R^d,
\end{array}
\right.
\end{equation}	
where $\g\in C([0,T],\cM^+(\R^d))$ with bounded first and second order moments. Then the solution of \eqref{transport_source} is
given by 
\[ \m_t(dx)= \E[m_{E_t}(dx)]=\int_0^\infty m^\m_s(dx)h_\b(s,t)ds \]
where  $m^\m\in C(\R^+,\cP)$	is the  solution of the linear transport problem
\[ \left\{\begin{array}{ll}
\pd_s m+\diver(\vv^\m(x,s)\,m)=\mM_s\quad & (x,s)\in \R^d\times\R^+, \\[8pt]
m_{s=0}=\m_0  & x\in\R^d,
\end{array}
\right. \]
with $\vv^\m$  defined as in \eqref{dual_vel_nonlinear} and the source term   given by
\[ \mM_s(dx)=\int_0^\infty \g_r(dx)\, g_\b(r,s)ds. \]
In terms of the push-forward by means of the flow $\Phi^\m$ associated to $\vv^\m$, the solution of \eqref{transport_source}
can be also written as
\[ 	\m_t(dx)= \int_0^\infty \left( \Phi^{\m}_s\#\m_0(dx)+\int_0^s\Phi^{\m}_{s-r}\#\mM_r(dx)\,dr\right) \, h_\b(s,t)ds. \]
\end{remark}



\begin{thebibliography}{99}

\bibitem{acv}
Allen, M.; Caffarelli, L.; Vasseur, A.  A parabolic problem with a fractional time derivative, Arch. Ration.
Mech. Anal. 221 (2016), 603-630.

 \bibitem{ags}
 Ambrosio, L.;  Gigli, N.;  Savar\'e, G.  
 \emph{Gradient flows in metric spaces and in the space of probability measures},
2nd Edition, Lectures in Mathematics ETH Z\"urich, Birkh\"auser Verlag, Basel, 2008.	


 
\bibitem{ccr}
Ca\~nizo, J. A.; Carrillo, J.A.;   Rosado, J.  A well-posedness theory in measures for some
kinetic models of collective motion, Math. Models Methods Appl. Sci.  21 (2011), 515--539.
 
\bibitem{cpt}
Cristiani,  E.;  Piccoli, B.;  Tosin, A.
\emph{Multiscale modeling of pedestrian dynamics,}
MS\&A. Modeling, Simulation and Applications, 12, 2nd Edition, Springer, Cham, 2014.

\bibitem{ehm}
Evers, J.H.M; Hille,  S.C.; Muntean, A. Measure-valued mass evolution problems with
flux boundary conditions and solution-dependent velocities, SIAM J. Math. Anal., 48 (2016), 1929-1953.

\bibitem{hku}
Hahn, M.; Kobayashi, K.; Umarov, S. SDEs driven by a time-changed L\'evy process and their associated time-fractional order pseudo-differential equations, J. Theor. Probab. 25 (2012), no. 1, 262-279.

 
\bibitem{ly} 
Luchko, Y.; Yamamoto, M.  General time-fractional diffusion equation: some uniqueness and existence
results for the initial-boundary-value problems, Fract. Calc. Appl. Anal. 19 (2016), no. 3, 676-695.

\bibitem{m}
Mainardi, F.   \emph{Fractional Calculus and Waves in Linear Viscoelasticity}, Imperial College Press, London, 2010.

\bibitem{ms}
Meerschaert, M.; Scheffler, H.-P. 
Limit theorems for continuous-time random walks with infinite mean waiting times, J. Appl. Prob. 41 (2004), 623-638.

\bibitem{mst}
Meerschaert, M.; Straka, P.   Inverse stable subordinators, Math Model Nat Phenom.  8 (2013), no. 2,  1-16.

\bibitem{mk}
Metzler, R.; Klafter, J.  The random walk's guide to anomalous diffusion: A fractional dynamics approach, Phys. Rep. 339 (2000), no.1, 1-77.

\bibitem{ob}
 Orsingher, E.; Beghin, L. Fractional diffusion equations and processes with randomly varying time,
Ann. Probab., 37 (2009), 206-249.
 
\bibitem{pi}
Piryatinska, A.; Saichev, A.I.; Woyczynski, W.  Models of anomalous diffusion: the subdiffusive case, Phys. A 349 (2005), 375 - 420.
 
\bibitem{po}
Podlubny, I.  {\it Fractional Differential Equations}, Academic Press, San Diego, 1999.

\bibitem{pr}
Poupaud, F.; Rascle, M. Measure solutions to the linear multi-dimensional transport equation
with non-smooth coefficients, Comm. Partial Differential Equations, 22 (1997),  337-358.

\bibitem{t}      
Tarasov, V.E. 
Review of some promising fractional physical models, Internat. J. Modern Phys. B   27, (2013), no.9, 1330005.


\end{thebibliography}
\end{document}